\newcommand{\hlabel}{\phantomsection\label}
\newtheorem{Step}{Step}
\newtheorem{Theorem}{Theorem}[section]
\newtheorem{Proposition}[Theorem]{Proposition}
\newtheorem{Remark}[Theorem]{Remark}
\newtheorem{Lemma}[Theorem]{Lemma}
\newtheorem{Definition}[Theorem]{Definition}
\newtheorem{Example}{Example}
\newtheorem{Algorithm}{Algorithm}
\let\expandafter\oldproof\csname\string\proof\endcsname
\let\oldendproof\endproof
\renewenvironment{proof}[1][\proofname]{
\oldproof[\ttfamily\scshape \bf #1.]
}{\oldendproof}
\newcommand{\set}[1]{\left\{#1\right\}}
\def\emp{\emptyset}
\def\dom{{\rm dom}\,}
\def\ox{\overline{x}}
\def\disp{\displaystyle}
\def\Bar{\overline}
\def\ra{\rangle}
\def\la{\langle}
\def\epsilon{\varepsilon}
\def\ox{\bar{x}}
\def\prox{\mbox{\rm Prox}}
\def\dom{\mbox{\rm dom}\,}
\def\emp{\emptyset}
\def\st{\stackrel}
\def\oR{\Bar{\R}}
\def \N{{\rm I\!N}}
\def \R{{\rm I\!R}}
\def\vt{\vartheta}
\newcommand{\dotproduct}[1]{\left\langle#1\right\rangle}
\newcommand{\brac}[1]{\left(#1\right)}
\newcommand{\norm}[1]{\left\|#1\right\|}
\numberwithin{equation}{section}
\title{\bf A New Inexact Gradient Descent Method \\
with Applications to Nonsmooth Convex Optimization}
\author{Pham Duy Khanh\footnote{Department of Mathematics, Ho Chi Minh City University of Education, Ho Chi Minh City, Vietnam. E-mails: pdkhanh182@gmail.com, khanhpd@hcmue.edu.vn. Research of this 
 author is funded by the Ministry of Education and Training Research Funding under the project ``\textit{First-order approximation methods and applications}''}\quad Boris S. Mordukhovich\footnote{Department of Mathematics, Wayne State University, Detroit, Michigan, USA. E-mail: aa1086@wayne.edu. Research of this author was partly supported by the US National Science Foundation under grants DMS-1808978 and DMS-2204519, by the Australian Research Council under grant DP-190100555, and by Project 111 of China under grant D21024.}\quad Dat Ba Tran\footnote{Department of Mathematics, Wayne State University, Detroit, Michigan, USA. E-mail: tranbadat@wayne.edu. Research of this author was partly supported by the US National Science Foundation under grants DMS-1808978 and DMS-2204519.}}
\begin{document}
\maketitle
\vspace*{-0.3in}
\begin{center}
 {\bf Dedicated to the memory of Andreas Griewank,\\ an outstanding mathematician and a wonderful human being} 
\end{center}

\noindent
{\small{\bf Abstract}. The paper proposes and develops a novel inexact gradient method (IGD) for minimizing ${\cal C}^1$-smooth functions with Lipschitzian gradients, i.e., for problems of ${\cal C}^{1,1}$ optimization. We show that the sequence of gradients generated by IGD converges to zero. The convergence of iterates to stationary points is guaranteed under the Kurdyka-\L ojasiewicz (KL) property of the objective function with convergence rates depending on the KL exponent. The newly developed IGD is applied to designing two novel gradient-based methods of nonsmooth convex optimization such as the inexact proximal point methods (GIPPM) and the inexact augmented Lagrangian method (GIALM) for convex programs with linear equality constraints. These two methods inherit global convergence properties from IGD and are confirmed by numerical experiments to have practical advantages over some well-known algorithms of nonsmooth convex optimization.}\\[1ex] 
{\bf Key words}: inexact gradient methods, inexact proximal point methods, inexact augmented Lagrangian methods, ${\cal C}^{1,1}$ optimization, nonsmooth  convex optimization\\[1ex]
{\bf Mathematics Subject Classification (2020)} 90C52, 90C56, 90C25, 90C26

 \section{Introduction}\label{sec:intro}

 The paper addresses numerical optimization, the area to which the contribution of Andreas Griewank is difficult to overstate. In particular, his book with Andrea Walther \cite{andreas} is the classical collection of constructive methods of algorithmic differentiation in nonlinear optimization being a strong inspiration for a great many of applied mathematicians and practitioners.\vspace*{0.03in}

This paper is devoted to developing new algorithms to solve various classes of optimization problems. Let us start with the following basic problem of {\em ${\cal C}^{1,1}$ optimization}:
\begin{align}\label{optim prob}
{\rm minimize}\quad f(x)\quad\text{ subject to }\; x\in\R^n,
\end{align}
where $f:\R^n\rightarrow\R$ is a continuously differentiable ($\mathcal{C}^1$-smooth) objective function whose gradient $\nabla f$ is globally Lipschitzian on $\R^n$ with some constant $L>0$. A very natural and classical approach to solve optimization problems of type (\ref{optim prob}) is by using the \textit{gradient descent method} described as follows: given a starting point $x^1\in\R^n$, construct the iterative procedure 
\begin{align}\label{intro: stepsize + descent direction}
x^{k+1}:=x^k-\dfrac{1}{L}\nabla f(x^k)\;\text{ for all }\;k\in\N.
\end{align}
Largely due to its simplicity, the gradient descent method is broadly used to solve various optimization problems; see, e.g., \cite{beckbook,bottou18,curry44,nesterovbook18,nielsen15}. However, errors in gradient calculations may appear deterministically in various contexts. In derivative-free optimization problems where only the function values are accessible, the exact gradients are not available but only their approximations via finite differences and simplex gradients are \cite{masoud,audet17,conn09}. In addition, many optimization methods for minimizing nonsmooth functions {\cite{devolder14,nel14,pas17,rockafellar76,rockafellar alm,themelis20,themelis17} can be treated via the gradient descent method applied to their regularized functions. Errors in gradient calculations for regularized functions, which appear in practical situations, require developing gradient descent methods with {\em inexact gradient information}.

For these reasons, gradient methods taking errors into account have been developed over the years. Among the major ones, we list the following. Gilmore and Kelley \cite{gilmore95} proposed the implicit filtering algorithm for minimizing a noisy smooth function under box constraints, which uses finite difference approximations of the exact gradient. Bertsekas and Tsitsiklis \cite{bertsekas00} justified the convergence of gradients in gradient methods with errors smaller than the multiplication of the stepsize by an expression depending on the exact gradient. The convergence for incremental and stochastic gradient methods was also established in \cite{bertsekas00}. Nesterov proposed in \cite{nesterov05} and further developed in \cite{nesterov15} and his paper with Devolder and  Glineur
\cite{devolder14} gradient methods with inexact oracle for convex functions. {Generalizations of the inexact oracle are discussed in \cite{bog16, dvu17} for nonconvex functions. Additionally, accelerated gradient methods considering various types of errors are analyzed in \cite{vasin23}. Optimization methods with stochastic errors are also studied in \cite{ghadimi13,ghadimi13}.} Quite recently, the authors of the present paper \cite{kmt22.1} proposed a general framework of inexact reduced gradient (IRG) methods for minimizing nonconvex $\mathcal{C}^1$-smooth function with or without the Lipschitz continuity of the gradient under the condition that the distance between the exact and the approximate gradient is less than a determined error at each iteration. To the best of our knowledge, \cite{kmt22.1} is the first paper addressing the global convergence and convergence rates of inexact gradient descent methods with deterministic error for general classes of nonconvex functions.\vspace*{0.03in}

The present paper designs and justifies the novel {\em inexact gradient descent} (IGD) method to solve problem \eqref{optim prob} by using some IRG ideas from \cite{kmt22.1}. The iterative procedure of IGD is \eqref{intro: stepsize + descent direction} with the exact gradient replaced by one of its approximations. The main differences between IGD and its predecessor IRG are  that the construction of IGD in Algorithm~\ref{al gd inexact} use directly an approximate gradient as a descent direction instead of introducing another reduced gradient direction as in \cite[Algorithm~1]{kmt22.1}. The convergence analysis of the new IGD method does not involve the notion of null iterations which is essential in the results of \cite[Propositions~4.6, 4.7]{kmt22.1} and the construction of non-null iteration as in \cite[(5.23)]{kmt22.1}. The IGD method also achieves the following fundamental convergence properties:

\begin{itemize}
\item Convergence to zero of the gradient sequence.

\item Convergence to some stationary point of the iterative sequence under the Kurdyka-\L ojasiewicz (KL) property of the objective function.

\item Convergence rates depending on the KL exponent of the objective function for the sequences of iterates, gradients, and function values.
\end{itemize}

It should be mentioned that the results in \cite{devolder14,nesterov05,nesterov15} address only convex optimization problems. Although the implicit filtering method in \cite{gilmore95} deals with nonconvex problems, it does not achieve any of the fundamental convergence properties above due to the presence of noise. The gradient methods with errors in \cite{bertsekas00} establish only the convergence to zero of the gradient sequence. Moreover, the rates of convergence for the gradient sequence and function value sequence are not provided in \cite{kmt22.1}.\vspace*{0.05in}

The IGD method developed in this paper for ${\cal C}^{1,1}$ optimization problems of type \eqref{optim prob} is applied then to problems of {\em nonsmooth optimization} to design and justify the following gradient-based methods:

\begin{itemize}
\item The {\em gradient-based inexact proximal point method} (GIPPM) for minimizing general nonsmooth convex functions.

\item The {\em gradient-based inexact augmented Lagrangian method} (GIALM) for minimizing nonsmooth convex functions under linear equality constraints.
\end{itemize}

The two methods listed above inherit the convergence properties from IGD with establishing in this way the stationarity of accumulation points and the global convergence under the KL property of the objective function with convergence rates depending on the KL exponent. Regarding application aspects, the conducted numerical experiments show that the GIALM method with the new handling of errors exhibit a better performance than the classical inexact augmented Lagrangian  method by Rockafellar \cite{rockafellar alm} in practical problems of image processing as well as in random generated examples.\vspace*{0.05in}

 The rest of the paper is organized as follows. Section~\ref{sec prelim} presents some preliminaries and discussions. The construction and convergence analysis of the main IGD method are given in Section~\ref{sec Stationarity of accumulation points}. The next Section~\ref{sec:5}, which is split into two subsections, proposes and justifies the new GIPPM and GIALM methods of nonsmooth convex optimization with their convergence analysis. The numerical experiments and comparisons between the newly developed  inexact gradient-based methods with the existing algorithms are given in the final Section~\ref{sec:6}. 

\section{Preliminaries and Discussions}\label{sec prelim}

First we recall some notions and notation frequently used in the paper. All our considerations are given in the space $\R^n$ with the Euclidean norm $\|\cdot\|$. We use the matrix norm defined by
\begin{equation*}
\norm{A}:=\max\big\{\norm{Ax}\;\big|\;\norm{x}=1\big\}\;\text{ for any }\;m\times n \text{ matrix }A.
\end{equation*}
Along with the Euclidean norm $\norm{\cdot}$, in Section~\ref{sec:6} we also use the $\ell_1$ norm $\norm{\cdot}_1$ denoted by
\begin{equation*}
\disp\norm{x}_1=\sum_{k=1}^n|x_k|\;\text{ for all }\;x=(x_1,\ldots,x_n)\in\R^n.
\end{equation*}
As always, $\N:=\{1,2,\ldots\}$ signifies the collections of natural numbers. The symbol $x^k\st{J}{\to}\ox$ means that $x^k\to\ox$ as $k\to\infty$ with $k\in J\subset\N$. Recall that $\bar x$ is a \textit{stationary point} of a $\mathcal{C}^1$-smooth function $f\colon\R^n\rightarrow\R$ if $\nabla f(\bar x)=0$. A $\mathcal{C}^1$-smooth function $f:\R^n\rightarrow \R$ is said to have a Lipschitz continuous gradient with the uniform constant $L>0$ on $\R^n$ if 
\begin{align*}
\norm{\nabla f(x)-\nabla f(y)}\le L\norm{x-y}\;\text{ for all }\;x,y\in\R^n.
\end{align*}
It follows from \cite[Lemma~A.11]{solodovbook} that the Lipschitz continuity of $\nabla f$ with constant $L>0$ implies that 
 \begin{align}\label{descent condition}
f(y)\le f(x)+\dotproduct{\nabla f(x),y-x}+\dfrac{L}{2}\norm{y-x}^2\;\text{ for all }\;x,y\in\R^n.
\end{align}
The converse implication may not hold in general as discussed in \cite[Section~2]{kmt22.1}.\vspace*{0.05in}
 
The following  {\em Kurdyka-\L ojasiewicz property} is taken from Absil et al. \cite[Theorem~3.4]{absil05}.

\begin{Definition}\rm \label{KL ine}\rm
Let $f:\R^n\rightarrow\R$ be a differentiable function. We say that $f$ satisfies the \textit{KL property} at $\bar{x}\in\R^n$ if there exist a number $\eta>0$, a neighborhood $U$ of $\bar{x}$, and a nondecreasing function $\psi:(0,\eta)\rightarrow(0,\infty)$ such that the function $1/\psi$ is integrable over $(0,\eta)$ and we have
\begin{align}\label{kl absil}
 \norm{\nabla f(x)}\ge\psi\big(f(x)-f(\bar{x})\big)\;\mbox{ for all }\;x\in U\;\mbox{ with }\;f(\bar x)<f(x)<f(\bar x)+\eta.
\end{align}
\end{Definition}

\begin{Remark}\rm\label{algebraic}
It has been realized that the KL property  is satisfied in broad settings. In particular, it holds at every {\em nonstationary point} of $f$; see \cite[Lemma~2.1~and~Remark~3.2(b)]{attouch10}.  Furthermore, it is proved at the seminal paper by \L ojasiewicz \cite{lojasiewicz65} that any analytic function $f:\R^n\rightarrow\R$ satisfies the KL property at every point $\ox$ with $\psi(t)~=~Mt^{q}$ for some $q\in [0,1)$. As demonstrated in \cite[Section~2]{kmt22.1}, the KL property formulated in Attouch et al. \cite{attouch10} is stronger than the one in Definition~\ref{KL ine}. Typical smooth functions that satisfy the KL property from \cite{attouch10}, and hence the one from Definition~\ref{KL ine}, are smooth {\em semialgebraic} functions and also those from the more general class of functions known as {\em definable in o-minimal structures}; see \cite{attouch10,attouch13,kurdyka}. The following preservation properties for general semialgebraic functions can be found, e.g., in \cite{attouch10}:  
\begin{itemize}
\item Finite sums and products of semialgebraic functions are semialgebraic.
\item Functions of the marginal type type 
\begin{equation*}
\R^n \ni x \mapsto f(x)=\inf_{y\in\R^m} g(x,y),
\end{equation*} 
where $g$ is a semialgebraic function of two variables, are semialgebraic.
\end{itemize}
 \end{Remark}

 Next we present based on \cite{absil05} some descent-type conditions ensuring the global convergence of iterates for smooth functions that satisfy the KL property. 
 
\begin{Proposition}\label{general convergence under KL}
Let $f:\R^n\rightarrow\R$ be a $\mathcal{C}^1$-smooth function, and let the following conditions hold along a sequence of iterates $\set{x^k}\subset\R^n$ for the function $f$:
\begin{itemize}
\item[\bf(H1)] {\rm(primary descent condition)}. There is $\sigma>0$ such that we have 
\begin{align*}
 f(x^k)-f(x^{k+1})\ge\sigma\norm{\nabla f(x^k)}\cdot\norm{x^{k+1}-x^k}
\end{align*}
{for sufficiently large $k\in\N$.}
\item[\bf(H2)] {\rm(complementary descent condition)}. {The following implication holds:}
\begin{align*}
 \big[f(x^{k+1})=f(x^k)\big]\Longrightarrow [x^{k+1}=x^k]
\end{align*}
{for sufficiently large $k\in\N$.}
\end{itemize}
If $\bar x$ is an accumulation point of $\set{x^k}$  and $f$ satisfies the KL property at $\bar x$, then $x^k\rightarrow\bar x$ as $k\to\infty$.
\end{Proposition}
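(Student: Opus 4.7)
The plan is to prove that $\sum_{k} \|x^{k+1}-x^k\|<\infty$; once established, Cauchy-ness forces $\{x^k\}$ to converge, and since $\bar x$ is already an accumulation point the limit must be $\bar x$. As a first step, observe that (H1) makes $\{f(x^k)\}$ nonincreasing (the right-hand side is nonnegative), so combined with continuity of $f$ along the subsequence $x^{k_j}\to\bar x$, we obtain $f(x^k)\downarrow f(\bar x)$.

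I would then split into two cases based on whether the descent eventually stalls. \textbf{Case 1:} if $f(x^{k_0})=f(\bar x)$ for some $k_0$, monotonicity forces $f(x^k)=f(x^{k+1})$ for all $k\ge k_0$, and (H2) yields $x^{k+1}=x^k$ for all such $k$. The sequence is eventually constant and coincides with $\bar x$ by the subsequence. \textbf{Case 2:} $f(x^k)>f(\bar x)$ for all $k$, where the KL property is invoked on a ball $U=B(\bar x,r)$ with parameter $\eta>0$ and desingularizing function $\psi$.

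Define the antiderivative $\Phi(t):=\int_0^t\psi(s)^{-1}\,\d s$, which is finite and nondecreasing on $(0,\eta)$ by integrability of $1/\psi$. The central computation chains together monotonicity of $\Phi$, the primary descent (H1), and the KL inequality:
\begin{align*}
\Phi\bigl(f(x^k)-f(\bar x)\bigr)-\Phi\bigl(f(x^{k+1})-f(\bar x)\bigr)
&\ge \frac{f(x^k)-f(x^{k+1})}{\psi\bigl(f(x^k)-f(\bar x)\bigr)}\\
&\ge \frac{\sigma\,\norm{\nabla f(x^k)}\cdot\norm{x^{k+1}-x^k}}{\psi\bigl(f(x^k)-f(\bar x)\bigr)}
\ge \sigma\,\norm{x^{k+1}-x^k},
\end{align*}
where the first inequality uses that $1/\psi$ is nonincreasing, and the last uses the KL bound $\psi(f(x^k)-f(\bar x))\le\norm{\nabla f(x^k)}$. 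Telescoping from any starting index $k_0$ yields
\begin{equation*}
\sum_{k=k_0}^{N}\norm{x^{k+1}-x^k}\le \frac{1}{\sigma}\,\Phi\bigl(f(x^{k_0})-f(\bar x)\bigr),
\end{equation*}
hence summability of the displacements.

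The principal obstacle is legitimately applying the KL inequality at every $x^k$, which requires all iterates to remain in $U$ and to satisfy $f(x^k)-f(\bar x)<\eta$. I would handle this by choosing $k_0$ in the subsequence $\{k_j\}$ large enough that $\norm{x^{k_0}-\bar x\,}$, $f(x^{k_0})-f(\bar x)$, and $\sigma^{-1}\Phi(f(x^{k_0})-f(\bar x))$ are all smaller than explicit fractions of $r$ and $\eta$, and then use induction on $k$: if $x^{k_0},\ldots,x^k\in U$, the telescoping bound above gives $\norm{x^{k+1}-\bar x\,}\le\norm{x^{k_0}-\bar x\,}+\sigma^{-1}\Phi(f(x^{k_0})-f(\bar x))<r$, keeping $x^{k+1}\in U$. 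Monotonicity of $f(x^k)$ handles the $\eta$-constraint automatically. This inductive trapping closes the loop and delivers convergence of $\{x^k\}$ to $\bar x$.
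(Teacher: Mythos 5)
Your argument is correct and complete: the eventual monotonicity of $\{f(x^k)\}$ from (H1), the two-case split with (H2) disposing of the stalling case, the telescoping bound on $\Phi\circ(f(\cdot)-f(\bar x))$ via the KL inequality, and the inductive trapping of the iterates in $U$ together give summability of $\norm{x^{k+1}-x^k}$ and hence convergence to the accumulation point $\bar x$. The paper itself states this proposition without proof, importing it from Absil, Mahony and Andrews, and your proof is essentially the standard argument from that reference (the only cosmetic point being that (H1) makes $\{f(x^k)\}$ only \emph{eventually} nonincreasing, which your choice of a large starting index $k_0$ already accommodates).
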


When the sequence under consideration is generated by a linesearch method and satisfies some stronger conditions than (H1) and (H2) in 
Proposition~\ref{general convergence under KL}, its convergence rates are established in \cite{kmt22.1} under the KL property with $\psi(t)=Mt^{q}$ as given below.

\begin{Proposition}\label{general rate}
Let $f:\R^n\rightarrow\R$ be a $\mathcal{C}^1$-smooth function. {Assume that the sequences $\set{x^k}$ and $\set{d^k}$ satisfy the iterative update $x^{k+1}=x^k+\tau d^k$ with $x^{k+1}\ne x^k$ for all $k\in\N$, and that 
\begin{align}\label{two conditions}
f(x^k)-f(x^{k+1})\ge \alpha\norm{d^k}^2\;\text{ and }\;\norm{\nabla f(x^k)}\le \beta\norm{d^k}
\end{align}
for sufficiently large $k\in\N$ with some constants $\tau,\alpha,\beta>0$.} Suppose  in addition that $\bar x$ is an accumulation point of $\set{x^k}$ and that $f$ satisfies the KL property at $\bar x$ with $\psi(t)=Mt^{q}$ for some $M>0$ and $q\in(0,1)$. Then the following convergence rates are guaranteed:
\begin{itemize}\vspace*{-0.05in}
\item[\bf(i)] For $q\in(0,1/2]$, the sequence $\set{x^k}$ converges linearly to $\bar x$ as $k\to\infty$.\vspace*{-0.05in}
\item[\bf(ii)] For $q\in(1/2,1)$, we have  $\norm{x^k-\bar x}=\mathcal{O}( k^{-\frac{1-q}{2q-1}})$ as $k\to\infty$
\end{itemize}
\end{Proposition}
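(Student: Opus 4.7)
The plan is to first deduce $x^k\to\bar x$ from Proposition~\ref{general convergence under KL} and then to bootstrap the KL inequality with the two descent estimates in \eqref{two conditions} to obtain a self-referential recursion for the tail sum $\sigma_k:=\sum_{j\ge k}\|x^{j+1}-x^j\|$, which dominates $\|x^k-\bar x\|$. Using $\|x^{k+1}-x^k\|=\tau\|d^k\|$, the bounds in \eqref{two conditions} combine to yield
\[
f(x^k)-f(x^{k+1})\ge \alpha\|d^k\|^2\ge \frac{\alpha}{\tau\beta}\|\nabla f(x^k)\|\cdot\|x^{k+1}-x^k\|,
\]
which is (H1); condition (H2) is vacuous since $x^{k+1}\ne x^k$ is assumed. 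Proposition~\ref{general convergence under KL} then gives $x^k\to\bar x$.

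For the rate, I would set $r_k:=f(x^k)-f(\bar x)$ (non-increasing and tending to $0$; implicitly positive, else the argument terminates). The KL property provides $\|\nabla f(x^k)\|\ge Mr_k^q$ for large $k$, which combined with \eqref{two conditions} gives $r_k-r_{k+1}\ge(\alpha M^2/\beta^2)r_k^{2q}$. Introducing the concave desingularizing function $\varphi(t):=t^{1-q}/(1-q)$ with $\varphi'(t)=t^{-q}$, concavity together with $r_k^{-q}\ge M/\|\nabla f(x^k)\|$ and $\|\nabla f(x^k)\|\le\beta\|d^k\|$ yields
\[
\varphi(r_k)-\varphi(r_{k+1})\ge \frac{\alpha M}{\beta\tau}\,\|x^{k+1}-x^k\|.
\]
Telescoping and letting $r_N\to 0$ produces a first bound $\sigma_k\le C_1 r_k^{1-q}$; invoking KL in the other direction via $\|\nabla f(x^k)\|\le(\beta/\tau)(\sigma_k-\sigma_{k+1})$ gives $r_k\le C_2(\sigma_k-\sigma_{k+1})^{1/q}$. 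Combining these, I arrive at the closed recursion
\[
\sigma_k\le C_3(\sigma_k-\sigma_{k+1})^{(1-q)/q}.
\]

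Finally, I would split on the value of $q$. When $q\in(0,1/2]$ the exponent $(1-q)/q\ge 1$; since $\sigma_k\to 0$, eventually $\sigma_k-\sigma_{k+1}\le 1$ and the bound collapses to $\sigma_k\le C_3(\sigma_k-\sigma_{k+1})$, giving $\sigma_{k+1}\le(1-1/C_3)\sigma_k$ and hence linear convergence of $\{x^k\}$ to $\bar x$. When $q\in(1/2,1)$ the exponent $(1-q)/q$ lies in $(0,1)$; with $\gamma:=q/(1-q)>1$, rearranging yields $\sigma_k-\sigma_{k+1}\ge C_4\sigma_k^\gamma$, and the standard convexity trick applied to $t\mapsto t^{1-\gamma}$ shows $\sigma_k^{1-\gamma}-\sigma_{k-1}^{1-\gamma}\ge(\gamma-1)C_4$, so that $\sigma_k^{1-\gamma}$ grows at least linearly in $k$ and $\sigma_k=\mathcal{O}(k^{-(1-q)/(2q-1)})$. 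The main obstacle will be synchronizing the two directions of the KL inequality in order to fuse the upper bound $\sigma_k\le C_1 r_k^{1-q}$ with the lower bound $r_k\le C_2(\sigma_k-\sigma_{k+1})^{1/q}$ into a single $\sigma$-recursion; once that is in hand, the case split above is routine bookkeeping.
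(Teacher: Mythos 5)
Your argument is correct. Note that the paper itself contains no proof of Proposition~\ref{general rate} --- it is quoted from \cite{kmt22.1} --- and what you give is precisely the standard KL-rate scheme such references follow: the two inequalities in \eqref{two conditions} supply the sufficient-decrease and relative-error conditions, the concave desingularization $\varphi(t)=t^{1-q}/(1-q)$ telescopes to $\sigma_k:=\sum_{j\ge k}\|x^{j+1}-x^j\|\le C_1\big(f(x^k)-f(\bar x)\big)^{1-q}$, and the reverse KL estimate closes this into the recursion $\sigma_k\le C_3(\sigma_k-\sigma_{k+1})^{(1-q)/q}$, whose two regimes in $q$ are handled by the usual elementary lemmas (your worry about ``synchronizing'' the two directions is unfounded, since both KL inequalities hold simultaneously for all large $k$ once $x^k\in U$ and $f(\bar x)<f(x^k)<f(\bar x)+\eta$). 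Two minor points: (H2) is not vacuous merely because $x^{k+1}\ne x^k$ --- its antecedent fails because $d^k\ne 0$ forces the strict decrease $f(x^k)-f(x^{k+1})\ge\alpha\|d^k\|^2>0$, and this strict decrease is also what guarantees $f(x^k)>f(\bar x)$ so that KL is applicable; moreover, your telescoped bound already shows $\{x^k\}$ is Cauchy, so the preliminary appeal to Proposition~\ref{general convergence under KL} is dispensable.
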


\section{Inexact Gradient Descent Method}\label{sec Stationarity of accumulation points}

In this section, which is split into two {subsections}, we design and provide a convergence analysis of our main {\em inexact gradient method} (IGD) to solve ${\cal C}^{1,1}$ optimization problems of type \eqref{optim prob}.

\subsection{Algorithm Formulation and Motivating Examples}

Here is the proposed IGD algorithm to solve \eqref{optim prob} by using inexact gradients.

\begin{Algorithm}[IGD]\rm \label{al gd inexact}
\quad
\setcounter{Step}{-1}
\begin{Step}[initialization]\rm Select an initial point $x^1\in \R^n$, initial error $\varepsilon_1$, error reduction factor $\theta\in(0,1)$, and error scaling factor $\mu>1.$ Set $k:=1$.
\end{Step}
\begin{Step}[inexact gradient]\rm \label{Step 1}
Find  $g^k$ and the smallest natural number $i_k$ such that
\begin{align}\label{calculate approx grad}
\norm{g^k-\nabla f(x^k)}\le \theta^{i_k}\varepsilon_k \text{ and }\norm{g^k}> \mu \theta^{i_k}\varepsilon_k.
\end{align}
\end{Step}
\begin{Step}[iteration and error update]\rm \label{Step 2}
 Set $x^{k+1}:=x^{k}-\frac{1}{L}g^k$ and $\varepsilon_{k+1}:=\theta^{i_k}\varepsilon_k.$ Increase $k$ by $1$ and go back to Step \ref{Step 1}.
\end{Step}
\end{Algorithm}

Let us discuss some important features  of Algorithm~\ref{al gd inexact} and present an illustrating figure.

\begin{Remark}\rm \label{remark IGD} We have the following observations:
 \begin{enumerate}[\bf(i)]
\item \textit{The existence of $g^k$ and $i_k$ in Step~{\rm 1}}: The procedure of finding $g^k$ and $i_k$ that satisfy Step~\ref{Step 1} can be given as follows. Set $i_k=0$ and find some $g^k$ such that 
\begin{align}\label{recal}
\norm{g^k-\nabla f(x^k)}\le \theta^{i_k}\varepsilon_k.
\end{align}
While $\norm{g^k}\le \mu\theta^{i_k}\varepsilon_k$, increase $i_k$ by $1$ and recalculate $g^k$ under \eqref{recal}. When $\nabla f(x^k)\ne 0$, the existence of $g^k$ and $i_k$ in Step~1 is guaranteed. Indeed, otherwise we get a sequence $\set{g^k_i}$ with
\begin{align*}
\norm{g^k_i-\nabla f(x^k)}\le \theta^i\varepsilon_k\;\text{ and }\;\norm{g^k_i}\le \mu \theta^i\varepsilon_k\;\text{ for all }\;i\in\N.
\end{align*}
Since $\theta\in(0,1)$, this implies that $\nabla f(x^k)=0$, a contradiction. {In fact, we can bound the number $i_k$ by considering the two cases as follows.}
    
    {\textit{Case 1:} $\varepsilon_k<\frac{1}{\mu+1}\norm{\nabla f(x^k)}$. In this case, we show that $i_k=0.$}
    
    {To proceed, find some $g^k$ such that $\norm{g^k-\nabla f(x^k)}\le \varepsilon_k$. Combining this with  $\varepsilon_k<\frac{1}{\mu+1}\norm{\nabla f(x^k)}$ and with the triangle inequality yields
    \begin{align*}
       \norm{g^k}&\ge \norm{\nabla f(x^k)}- \norm{g^k-\nabla f(x^k)}\\
      &\ge \norm{\nabla f(x^k)}-\varepsilon_k \\
      &>\norm{\nabla f(x^k)}- \frac{1}{\mu+1}\norm{\nabla f(x^k)}\\
       &=\frac{\mu}{\mu+1}\norm{\nabla f(x^k)}> \mu\varepsilon_k.
    \end{align*}
    This means that $g^k$ satisfies the desired condition with $i_k=0.$}
    
    {\textit{Case 2:} $\varepsilon_k\ge\frac{1}{\mu+1}\norm{\nabla f(x^k)}.$ In this case, we show that $i_k\le \log_\theta \brac{\frac{1}{\varepsilon_k(\mu+1)}\norm{\nabla f(x^k)}}+1$.}
       
 {To proceed, assume while arguing by contradiction that $i_k> \log_\theta \brac{\frac{1}{\varepsilon_k(\mu+1)}\norm{\nabla f(x^k)}}+1$. Define $j:=i_k-1$ and get that $j>0.$ We show that for any $g^k\in\R^n$ such that $\norm{g^k-\nabla f(x^k)}\le \theta^j\varepsilon_k$, the estimate $\norm{g^k}>\theta^j\varepsilon_k$ holds, which violates the construction of $i_k$. It follows from $j> \log_\theta \brac{\frac{1}{\varepsilon_k(\mu+1)}\norm{\nabla f(x^k)}}$ and $\theta\in (0,1)$ that
    \begin{align*}
        \theta^{j}\varepsilon_k<\frac{1}{\mu+1}\norm{\nabla f(x^k)}.
    \end{align*}
    Now take any $g^k\in\R^n$ such that $\norm{g^k-\nabla f(x^k)}\le \theta^{j}\varepsilon_k$ and deduce that  
\begin{align*}
    \norm{g^k-\nabla f(x^k)}\le \theta^{j}\varepsilon_k<\frac{1}{\mu+1} \norm{\nabla f(x^k)}.
\end{align*}
Applying the triangle inequality gives us
    \begin{align*}
       \norm{g^k}&\ge \norm{\nabla f(x^k)}- \norm{g^k-\nabla f(x^k)}\\
       &>\norm{\nabla f(x^k)}- \frac{1}{\mu+1}\norm{\nabla f(x^k)}\\
       &=\frac{\mu}{\mu+1}\norm{\nabla f(x^k)}> \mu\theta^j\varepsilon_k.
    \end{align*} 
    Therefore, $i_k\le \log_\theta \brac{\frac{1}{\varepsilon_k(\mu+1)}\norm{\nabla f(x^k)}}+1$.}

\item \textit{Geometric illustration of \eqref{calculate approx grad}}: It follows from Step~1 and Step~2 of Algorithm~\ref{al gd inexact} that 
\begin{align}\label{epsilon k+1}
\norm{g^k-\nabla f(x^k)}\le \varepsilon_{k+1}\;\text{ and }\;\norm{g^k}>\mu\varepsilon_{k+1}\;\text{ for all }\;k\in\N.
\end{align}
Since $\mu>1$, the two conditions in \eqref{epsilon k+1} make the angle between $\nabla f(x^k)$ and $g^k$ smaller than $90^0$, which ensures that $-g^k$ is a descent direction of $f$ at $x^k$. As illustrated in Figure~\ref{fig:geometric representation}, the angle between these vectors can be chosen arbitrarily small by increasing $\mu$.

\begin{figure}[H]
\centering
\includegraphics[width=0.7\textwidth]{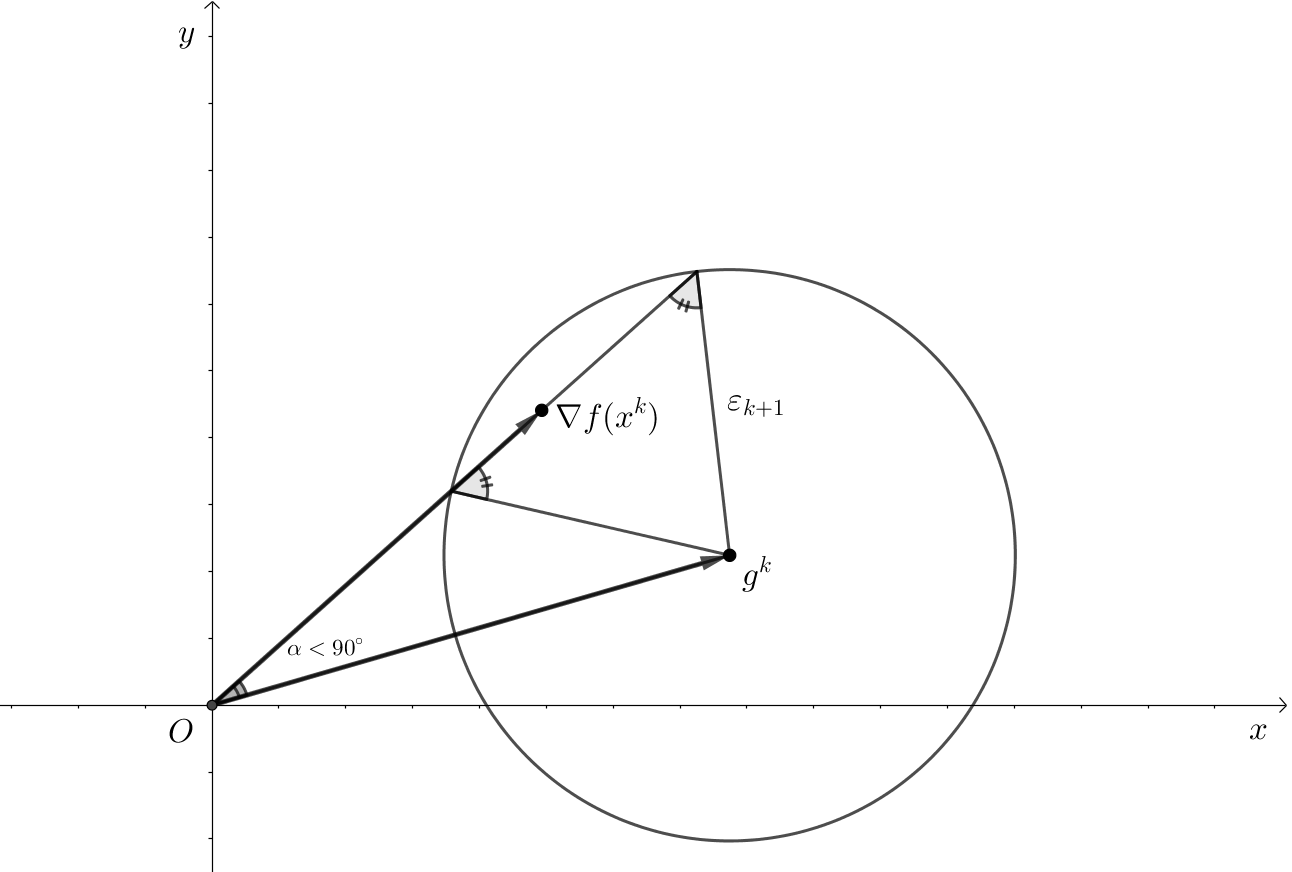}
\caption{Geometric illustration of \eqref{calculate approx grad}}
\label{fig:geometric representation}
\end{figure}
 \end{enumerate}
\end{Remark}

Note that the problem of finding $g^k$ satisfying the first condition in \eqref{calculate approx grad} can be considered more generally as follows. Given any point $x\in\R^n$ and any error value $\varepsilon>0$, find an approximation $\mathcal{G}$ of the gradient $\nabla f(x)$ such that
\begin{align}\label{universal condition}
\norm{\mathcal{G}-\nabla f(x)}\le \varepsilon.
\end{align}

Let us present some examples where the approximate condition \eqref{universal condition} appears in practical situations.

\begin{Example}[Gradient approximation methods]\rm \label{example 1} In many problems of {\em derivative-free optimization} \cite{audet17,conn09}, we only have access to function values, which can be used to derive approximate gradients satisfying \eqref{universal condition} by employing gradient approximation methods.

\begin{enumerate}
\item \textit{Forward finite difference $($FFD$)$}: For a fixed number $\delta>0$, the FFD formula gives us the approximation of the gradient $\nabla f(x)$ defined by
\begin{align}
\mathcal{G}:=\dfrac{1}{\delta}\sum_{i=1}^n\big(f(x+\delta e_i)-f(x)\big)e_i,
\end{align}
where $e_i$ is the $i^{\rm th}$ vector in the standard basis of $\R^n$. The following error bound is standard and can be found, e.g., in \cite[Section~7.2]{nocedalbook} and \cite[Theorem~2.1]{berahas22}:
\begin{align}\label{errb fordif}
\norm{\mathcal{G}-\nabla f(x)}\le \dfrac{L\sqrt{n}\delta}{2}.
\end{align}
Therefore, the choice of $\delta\in \brac{0,\frac{2\varepsilon}{L\sqrt{n}}}$ guarantees the error bound \eqref{universal condition}.

\item \textit{Centered finite difference $($CFD$)$}: For a given number $\delta>0$, the CFD formula gives us the approximation $\nabla f(x)$ defined by
\begin{align}
\mathcal{G}:=\dfrac{1}{\delta}\sum_{i=1}^n\Big(f\big(x+\frac{\delta}{2}e_i\big)-f\big(x-\dfrac{\delta}{2}e_i\big)\Big)e_i.
\end{align}
Assume that $f$ is twice continuously differentiable ($\mathcal{C}^2$-smooth) having the Lipschitz continuous Hessian with constant $M>0$. It follows from \cite[Theorem~2.2]{berahas22} that
\begin{align}\label{errb cendif}
\norm{\mathcal{G}-\nabla f(x)}\le \dfrac{\sqrt{n}M\delta^2}{24}.
\end{align}
Therefore, the choice of $\delta\in \brac{0,\sqrt{\frac{24\varepsilon}{M\sqrt{n}}}}$ guarantees the error bound required in \eqref{universal condition}.
\end{enumerate}
\end{Example}

When $f$ is smoothed from another (possibly {\em nonsmooth}) function, the exact information about {the} function value and its gradient is usually not available. However, some specific smoothing techniques allow us to find an approximation of the gradient that satisfies condition \eqref{universal condition}.

\begin{Example}[Moreau envelopes]\rm Let $g:\R^n\rightarrow\overline{\R}$ be a proper, lower semicontinuous (l.s.c.), convex function, and let $f:=e_\lambda g$ be its Moreau envelope with proximal parameter $\lambda>0$; see Section~\ref{sec:5} for more details. By \eqref{gradientMoreau}, the problem of calculating $\nabla f(x)$ approximately is equivalent to the approximate calculation of $\prox_{\lambda g}(x)$ approximately. The latter problem is equivalent to 
\begin{align}\label{minimize proximal}
\mathop{\rm minimize}_{y\in\R^n} \quad \varphi_\lambda(y):=g(y)+\dfrac{1}{2\lambda}\norm{y-x}^2.
\end{align}
Let $\bar y$ be such that $\varphi_\lambda (\bar y)-\inf_{y\in\R^n}\varphi_\lambda(y)\le \dfrac{\lambda\varepsilon^2}{2}$, and set $ \mathcal{G}:=\lambda^{-1}(x-\bar y)$. Since $\varphi_\lambda$ is strongly convex with positive constant $1/\lambda$, we deduce from \eqref{gradientMoreau} that
\begin{align*}
\norm{\mathcal{G}-\nabla f(x)}=\lambda^{-1}\norm{\bar y-\prox_{\lambda g}(x)}&\le\lambda^{-1}\sqrt{2\lambda \brac{\varphi_\lambda(\bar y)-\inf_{y\in\R^n}\varphi_\lambda (y)}}\\
&\le \lambda^{-1}\cdot \sqrt{2\lambda \dfrac{\lambda}{2}\varepsilon^2}=\varepsilon.
\end{align*}
Therefore, $\mathcal{G}$ is an approximation of $\nabla f(x)$ with the error bound $\varepsilon$.
\end{Example}

\subsection{Convergence Analysis}\label{sec convergence ana}

In this subsection, we establish the convergence properties of Algorithm~\ref{al gd inexact} including the convergence of the sequence of gradients (to zero), the sequence of iterates, and the sequence of function values together with their convergence  rates under the KL property of the objective function. Let us begin by connecting the conditions in \eqref{epsilon k+1} with the following two conditions on the gradient estimates:
\begin{align}\label{norm condition 1}
 &\norm{g^k-\nabla f(x^k)}\le \nu_1 \norm{\nabla f(x^k)}\;\text{ for all }\;k\in\N,\\
&\norm{g^k-\nabla f(x^k)}\le \nu_2 \norm{g^k}\;\text{ for all }\;k\in\N,\label{norm condition 2}
\end{align}
with some $\nu_1,\nu_2\in[0,1).$ Condition \eqref{norm condition 1} was first introduced and studied by Polyak \cite[Section~4.2.3]{polyakbook} for the convergence of inexact gradient methods for strongly convex functions. In \cite{berahas18}, the convergence of a general linesearch algorithm with noise is established when \eqref{norm condition 1} is satisfied with high probability. However, since $\nabla f(x^k)$ is unknown, it is not easy to ensure \eqref{norm condition 1}. In fact, the authors in \cite{berahas22} write: 

\medskip 

``{\em Clearly, unless we know $\norm{\nabla f(x^k)}$, condition \eqref{norm condition 1} may be hard or impossible to verify or guarantee."}

\medskip

Regarding condition \eqref{norm condition 2}, it was studied in \cite{carter91} for trust-region methods with inexact gradients for minimizing $\mathcal{C}^1$-smooth and  bounded from below functions with Lipschitzian gradients. For linesearch methods, condition \eqref{norm condition 2} is a standing assumption for investigating complexity of the inexact gradient method applied to $\mathcal{C}^2$-smooth \textit{strongly convex} functions in \cite{byrd12}, and also for convergence properties of the inexact gradient method applied to {\em convex smooth} functions in \cite{lan16}. 

 It turns out that the sequence $\set{g^k}$ generated by Algorithm \ref{al gd inexact} gives us a \textit{universal approach} to ensure both conditions \eqref{norm condition 1} and \eqref{norm condition 2} when $\mu$ is chosen properly. Indeed, it follows from \eqref{epsilon k+1} that \eqref{norm condition 2} is satisfied with $\nu_2=1/\mu$ if $\mu>1$. Applying the triangle inequality in \eqref{norm condition 2} brings us to \eqref{norm condition 1} with $\nu_1=\frac{1}{\mu-1}\in(0,1)$ whenever $\mu>2$. In addition to this observation, Step~\ref{Step 1} of Algorithm~\ref{al gd inexact} also tells us that the error between $g^k$ and $\nabla f(x^k)$ is chosen to be the largest one among $\set{\theta^{i}\varepsilon_k\;|\;i\in\N}$, which saves the executed time for finding $g^k$ in practice. In conclusion, Algorithm~\ref{al gd inexact} uses the gradient errors automatically controlled to be as large as possible while maintaining conditions \eqref{norm condition 1} and \eqref{norm condition 2}.\vspace*{0.05in}
 
Although conditions \eqref{norm condition 1} and \eqref{norm condition 2} are used in many contexts for linesearch methods, the global convergence of  the sequence of gradients (to zero), the sequence of iterates, and the sequence of function values together with their convergence rates \textit{were not established} in the studies listed above in the case of \textit{nonconvex functions}. These properties are now derived in the next theorem.

\begin{Theorem}\label{theo norm con}
Let $\set{x^k}$ and $\set{g^k}$ be the sequences satisfying the iterative procedure $x^{k+1}=x^k-\frac{1}{L}g^k$ for all $k\in\N$ under the condition \eqref{norm condition 2} with $\nu_2<\frac{1}{2}$. Assume that $\nabla f(x^k)\ne 0$ for all $k\in\N$. Then either $\norm{x^k}\rightarrow\infty$, or we have the assertions:
\begin{enumerate}[\bf(i)]
\item Both sequences $\set{\nabla f(x^k)}$ and $\set{g^k}$ converge to $0\in \R^n$ as $k\to\infty$.

\item If $f$ satisfies the KL property at some accumulation point $\bar x$ of $\set{x^k}$, then $x^k\rightarrow\bar x$ as $k\to\infty$.

\item If $f$ satisfies the KL property at some accumulation point $\bar x$ of $\set{x^k}$ with $\psi(t)=Mt^{q}$ for $M>0$ and $q\in(0,1)$, then the following convergence rates are guaranteed:
\begin{itemize}

\item For $q\in(0,1/2]$, the sequences $\set{x^k}$, $\{f(x^k)\}$, and $\{\nabla f(x^k)\}$ converge linearly to $\bar x$, $f(\bar x)$, and $0\in\R^n$ respectively.

\item For $q\in(1/2,1)$, we have the convergence rate estimates
\begin{equation}\label{rate1}
\norm{x^k-\bar x}=\mathcal{O}\brac{k^{-\frac{1-q}{2q-1}}},
\end{equation}
\begin{equation}\label{rate2}
f(x^k)-f(\bar x)=\mathcal{O}\brac{k^{-\frac{2-2q}{2q-1}}},
\end{equation}
\begin{equation}\label{rate3}
\norm{\nabla f(x^k)}=\mathcal{O}\brac{k^{-\frac{1-q}{2q-1}}}.
\end{equation}
\end{itemize}
\end{enumerate}
\end{Theorem}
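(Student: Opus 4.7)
The plan is to distill one master descent inequality and then feed it into Propositions \ref{general convergence under KL} and \ref{general rate}. Applying the descent lemma \eqref{descent condition} at $x^k$ and $x^{k+1}=x^k-\tfrac{1}{L}g^k$ gives $f(x^{k+1})\le f(x^k)-\tfrac{1}{L}\la\nabla f(x^k),g^k\ra+\tfrac{1}{2L}\|g^k\|^2$. Writing $\la\nabla f(x^k),g^k\ra=\|g^k\|^2-\la g^k-\nabla f(x^k),g^k\ra$ and combining Cauchy--Schwarz with \eqref{norm condition 2} yields the lower bound $\la\nabla f(x^k),g^k\ra\ge(1-\nu_2)\|g^k\|^2$. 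Since $\nu_2<1/2$, this produces the master inequality
\[
f(x^k)-f(x^{k+1})\ge \frac{1-2\nu_2}{2L}\|g^k\|^2
\]
with a strictly positive coefficient. A one-line triangle-inequality manipulation of \eqref{norm condition 2} also gives the companion bound $\|\nabla f(x^k)\|\le(1+\nu_2)\|g^k\|$. These two estimates are the workhorses for the rest of the proof.

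For (i), under the alternative $\|x^k\|\not\to\infty$ the sequence $\{x^k\}$ has an accumulation point $\bar x$, so the monotone nonincreasing sequence $\{f(x^k)\}$ is bounded below by $f(\bar x)$ and converges. Telescoping the master inequality then gives $\sum_k\|g^k\|^2<\infty$, so $g^k\to 0$, and the companion bound delivers $\nabla f(x^k)\to 0$. For (ii) I will verify hypotheses (H1) and (H2) of Proposition \ref{general convergence under KL}. Hypothesis (H2) is immediate because the master inequality turns $f(x^{k+1})=f(x^k)$ into $g^k=0$ and hence into $x^{k+1}=x^k$. Hypothesis (H1) follows by combining the master inequality with $\|g^k\|=L\|x^{k+1}-x^k\|$ and with $\|g^k\|\ge(1+\nu_2)^{-1}\|\nabla f(x^k)\|$ to produce
\[
f(x^k)-f(x^{k+1})\ge \frac{1-2\nu_2}{2(1+\nu_2)}\|\nabla f(x^k)\|\cdot\|x^{k+1}-x^k\|,
\]
so that Proposition \ref{general convergence under KL} yields $x^k\to\bar x$.

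For (iii) I will invoke Proposition \ref{general rate} with $\tau=1/L$ and $d^k=-g^k$: the master inequality supplies the first inequality in \eqref{two conditions} with $\alpha=(1-2\nu_2)/(2L)$ and the companion bound supplies the second with $\beta=1+\nu_2$, while $x^{k+1}\ne x^k$ holds because $\nabla f(x^k)\ne 0$ forces $g^k\ne 0$ via \eqref{norm condition 2}. This immediately produces the stated rate on $\|x^k-\bar x\|$, linear for $q\in(0,1/2]$ and \eqref{rate1} for $q\in(1/2,1)$. To upgrade to rates on $f$, I will apply \eqref{descent condition} at $x=\bar x$, $y=x^k$: because part (i) forces $\nabla f(\bar x)=0$, this gives $f(x^k)-f(\bar x)\le \tfrac{L}{2}\|x^k-\bar x\|^2$, which doubles the exponent and yields \eqref{rate2} in the polynomial regime together with a doubled contraction in the linear one. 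For the gradient, the companion bound together with the triangle inequality yields $\|\nabla f(x^k)\|\le(1+\nu_2)L(\|x^{k+1}-\bar x\|+\|x^k-\bar x\|)$, which inherits the rate of $\|x^k-\bar x\|$ and produces \eqref{rate3}.

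The only step I expect to require genuine care is this last transfer of rates: the quadratic upper bound $f(x^k)-f(\bar x)\le \tfrac{L}{2}\|x^k-\bar x\|^2$ is legitimate only because $\bar x$ is stationary, which is a free by-product of (i), and the gradient rate depends on passing from $\|g^k\|$ to consecutive iterate differences without losing the exponent. Everything else is clean bookkeeping once the master inequality is in place.
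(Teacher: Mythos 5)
Your proposal is correct and follows essentially the same route as the paper's proof: the same master descent inequality $f(x^k)-f(x^{k+1})\ge\frac{1-2\nu_2}{2L}\|g^k\|^2$, the same companion bound $\|\nabla f(x^k)\|\le(1+\nu_2)\|g^k\|$, the same verification of (H1)--(H2) and of the hypotheses of Proposition \ref{general rate}, and the same quadratic upper bound at the stationary accumulation point for the function-value rate. The only (immaterial) difference is at the very end: the paper obtains \eqref{rate3} directly from $\|\nabla f(x^k)\|=\|\nabla f(x^k)-\nabla f(\bar x)\|\le L\|x^k-\bar x\|$, whereas you route it through $\|g^k\|=L\|x^{k+1}-x^k\|$; both give the same rate.
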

\begin{proof}
Since $\nabla f$ is Lipschitz continuous with constant $L$, we deduce from the descent condition (\ref{descent condition}), the relationship $x^{k+1}=x^k-\frac{1}{L}g^k$, and the condition \eqref{norm condition 2} that
\begin{align}
f(x^{k+1})&\le f(x^k)+\dotproduct{\nabla f(x^k),x^{k+1}-x^k}+\dfrac{L}{2} \norm{x^{k+1}-x^k}^2\nonumber\\
&= f(x^k)+\dotproduct{\nabla f(x^k)-g^k,-\dfrac{1}{L}g^k}-\dfrac{1}{L}\norm{g^k}^2+\dfrac{L}{2} \norm{\dfrac{1}{L}g^k}^2\nonumber\\
&\le f(x^k)+\dfrac{1}{L}\norm{\nabla f(x^k)-g^k}\cdot\norm{g^k}-\dfrac{1}{2L}\norm{g^k}^2\nonumber\\
&\le f(x^k)+\dfrac{\nu_2}{L}\norm{g^k}^2-\dfrac{1}{2L}\norm{g^k}^2 \nonumber\\
&= f(x^k)-\frac{1-2\nu_2}{2L}\norm{g^k}^2\;\text{ for all }\;k\in \N\label{descent constant}.
\end{align}
It follows from $\nu_2<\frac{1}{2}$ that the sequence $\set{f(x^k)}$ is decreasing. If $\norm{x^k}\rightarrow\infty$ as $k\to\infty$, there is nothing to prove, and thus we suppose that $\set{x^k}$ has an accumulation point $\bar x$. Then $f(\bar x)$ is an accumulation point of $\set{f(x^k)}$. Combining this with the decreasing property of $\set{f(x^k)}$, we deduce that $\lim_{k\in\N} f(x^k)=f(\bar x)$, which implies that  $f(x^k)-f(x^{k+1})\rightarrow 0$. Then \eqref{descent constant} tells us that
 $g^k\rightarrow0$, which being combined with \eqref{norm condition 2} gives us $\nabla f(x^k)\rightarrow0$ as $k\to\infty$ and hence verifies (i).

To justify the assertions in (ii) and (iii), let $\bar x$ be some accumulation point of $\set{x^k}$, and let $f$ satisfy the KL property at $\bar x$. It follows from \eqref{norm condition 2} that 
\begin{align}\label{rate condition}
\norm{\nabla f(x^k)}\le \norm{g^k}+\norm{\nabla f(x^k)-g^k}\le (1+\nu_2)\norm{g^k}\;\text{ for all }\;k\in\N.
\end{align}
Combining the latter with \eqref{descent constant} and the recurrent relation $x^{k+1}=x^k-\frac{1}{L}g^k$ gives us the estimates
\begin{align}
f(x^{k})-f(x^{k+1})&\ge \frac{1-2\nu_2}{2L}\norm{g^k}^2=\frac{1-2\nu_2}{2}\norm{g^k}\norm{\dfrac{1}{L}g^k}\label{5 15 1}\\ 
&\ge\frac{1-2\nu_2}{2(1+\nu_2)}\norm{\nabla f(x^k)}\cdot\norm{x^{k+1}-x^k}\;\text{ for all }\;k\in\N.\label{5.13 2}
\end{align}
Thus assumption (H1) in Proposition~\ref{general convergence under KL} is satisfied with $\sigma=\frac{1-2\nu_2}{2(1+\nu_2)}>0$. The second assumption (H2) in Proposition~\ref{general convergence under KL} is also satisfied since $f(x^k)=f(x^{k+1})$ yields $g^k=0$ by \eqref{5 15 1} and hence $x^{k+1}=x^k$ by the relationship $x^{k+1}=x^k-\frac{1}{L}g^k$. Thus Proposition~\ref{general convergence under KL} brings us to $x^k\rightarrow\bar x$ as $k\to\infty$, which verifies (ii). 

 To justify (iii), we first employ Proposition~\ref{general rate} with $d^k=-g^k$ for all $k\in\N$ and $\tau=\frac{1}{L}$. By \eqref{5 15 1} and \eqref{rate condition}, both conditions in \eqref{two conditions} are satisfied with $\alpha=\frac{1-2\nu_2}{2L}$ and $\beta=1+\nu_2$. It follows from \eqref{rate condition} and $\nabla f(x^k)\ne 0$ that $g^k\ne 0$ for all $k\in\N$. Combining this with $x^{k+1}=x^k-\frac{1}{L}g^k$, we get that $x^{k+1}\ne x^k$ for all $k\in\N$. Therefore, all the assumptions in Proposition~\ref{general rate} are satisfied, which verifies the convergence rate of $\set{x^k}$ to $\bar x$ stated in (iii). Since $\bar x$ is an accumulation point of $\set{x^k}$, we also deduce from (i) that $\bar x$ is a stationary point of $f$, i.e., $\nabla f(\bar x)=0$. 
Therefore, it follows from the descent condition \eqref{descent condition} and the decreasing property of $\set{f(x^k)}$ that
\begin{align*}
0\le f(x^k)-f(\bar x)\le \dotproduct{\nabla f(\bar x),x^k-\bar x}+\dfrac{{L}}{2}\norm{x^k-\bar x}^2=\dfrac{{L}}{2}\norm{x^k-\bar x}^2,
\end{align*}
which justifies the convergence rates of $\set{f(x^k)}$ to $f(\bar x)$ as asserted in (iii).

It remains to verify the convergence rates for $\set{\nabla f(x^k)}$.  Since $\nabla f$ is Lipschitz continuous with constant $L>0$, this follows from the convergence rates for $\set{x^k}$ due to
\begin{align*}
\norm{\nabla f(x^k)}=\norm{\nabla f(x^k)-\nabla f(\bar x)}\le L\norm{x^k-\bar x},
\end{align*}
which therefore completes the proof of the theorem.
\end{proof}

\begin{Remark}\rm 
By the triangle inequality, the estimate in \eqref{norm condition 1} with $\nu_1<\frac{1}{3}$ yields the one in \eqref{norm condition 2} with $\nu_2<\frac{1}{2}$. Therefore, Theorem~\ref{theo norm con} also verifies the global convergence of the sequences in the general inexact gradient methods under \eqref{norm condition 1}.
\end{Remark}

Now we are ready to establish the convergence properties of the proposed IGD Algorithm~\ref{al gd inexact}.

\begin{Theorem} \label{convergence constant}
Consider Algorithm~{\rm\ref{al gd inexact}} with $\mu>2$ and assume that $\nabla f(x^k)\ne 0$ for all $k\in\N$. Then we have that either $\norm{x^k}\rightarrow\infty$, or the assertions in {\rm(i)--(iii)} in Theorem ~{\rm\ref{theo norm con}} hold with $\varepsilon_k\downarrow0$ as $k\to\infty$.
\end{Theorem}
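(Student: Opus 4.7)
The plan is to derive the conclusion from Theorem~\ref{theo norm con}, so the task reduces to verifying that the sequences $\{x^k\}$ and $\{g^k\}$ generated by Algorithm~\ref{al gd inexact} satisfy the recursion $x^{k+1}=x^k-\tfrac{1}{L}g^k$ together with the estimate \eqref{norm condition 2} for some $\nu_2<\tfrac12$. The first requirement is built into Step~\ref{Step 2}. For the second, I would invoke the reformulation \eqref{epsilon k+1} already recorded in Remark~\ref{remark IGD}(ii): substituting the update rule $\varepsilon_{k+1}=\theta^{i_k}\varepsilon_k$ into the two inequalities in \eqref{calculate approx grad} yields
\begin{equation*}
\norm{g^k-\nabla f(x^k)}\le\varepsilon_{k+1}\quad\text{and}\quad\norm{g^k}>\mu\,\varepsilon_{k+1}.
\end{equation*}
Dividing the first by the second gives $\norm{g^k-\nabla f(x^k)}<\tfrac{1}{\mu}\norm{g^k}$, so \eqref{norm condition 2} holds with $\nu_2:=1/\mu$, and since $\mu>2$ this $\nu_2$ is strictly less than $\tfrac12$. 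Theorem~\ref{theo norm con} then applies verbatim (its standing hypothesis $\nabla f(x^k)\ne 0$ matches ours), and its dichotomy between $\norm{x^k}\to\infty$ and the conclusions (i)--(iii) is precisely the dichotomy asserted here.

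It remains to establish the supplementary claim that $\varepsilon_k\downarrow 0$ in the non-diverging case. Monotonicity is immediate from the construction: since $\theta\in(0,1)$ and $i_k\ge 0$, one has $\varepsilon_{k+1}=\theta^{i_k}\varepsilon_k\le\varepsilon_k$. To push the limit down to zero, I would invoke part (i) of the just-applied Theorem~\ref{theo norm con}, which yields $g^k\to 0$; feeding this into the bound $\varepsilon_{k+1}<\norm{g^k}/\mu$ from \eqref{epsilon k+1} forces $\varepsilon_k\to 0$.

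I do not anticipate any genuine obstacle: the whole argument is a one-line reduction to Theorem~\ref{theo norm con} followed by a short bookkeeping step for $\varepsilon_k$. The only point requiring a moment of care is that the standing assumption $\nabla f(x^k)\ne 0$ plays a double role here, both ensuring that the inner search in Step~\ref{Step 1} terminates with a well-defined $i_k$ (as discussed in Remark~\ref{remark IGD}(i)) and supplying the matching hypothesis needed to invoke Theorem~\ref{theo norm con}.
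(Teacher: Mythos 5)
Your proposal is correct and follows essentially the same route as the paper's own proof: both reduce to Theorem~\ref{theo norm con} by observing that \eqref{epsilon k+1} yields condition \eqref{norm condition 2} with $\nu_2=1/\mu<1/2$ when $\mu>2$, and both obtain $\varepsilon_k\downarrow 0$ by combining $g^k\to 0$ with the bound $\norm{g^k}>\mu\varepsilon_{k+1}$. No gaps.
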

\begin{proof}
It follows from \eqref{epsilon k+1} that condition \eqref{norm condition 2} is satisfied with $\nu_2=\frac{1}{\mu}<2$. Then applying Theorem~\ref{theo norm con} yields its assertions (i)--(iii). The convergence to $0$ of $\set{g^k}$ in (i) and the inequality $\norm{g^k}>\mu\varepsilon_{k+1}$ for all $k\in\N$ from \eqref{epsilon k+1} tells us that $\varepsilon_k\downarrow 0$ and thus completes the proof.
\end{proof}

\section{Gradient-Based Inexact Methods in Nonsmooth Convex Optimization}\label{sec:5}

In this section, we employ the inexact gradient method for ${\cal C}^{1,1}$ optimization developed in Section~\ref{sec Stationarity of accumulation points} to design and justify two new gradient-based \textit{inexact methods} to solve problems of {\em nonsmooth convex optimization}. The reductions of such nonsmooth problems to  ${\cal C}^{1,1}$ optimization is conducted by using {\em smoothing procedures} via Moreau envelopes and proximal mappings.\vspace*{0.05in}

The methods we develop in this way are the following:

\begin{enumerate}[\bf(i)]
\item The {\em gradient-based inexact proximal point method} (GIPPM) to minimize l.s.c. convex functions.

\item The {\em gradient-based  inexact augmented Lagrangian method} (GIALM) to solve nonsmooth convex programs with equality linear constraints.
\end{enumerate}

Each of these two methods is considered in the corresponding subsection below. First we recall the appropriate constructions of convex analysis used in what follows; see \cite{bauschkebook,mor-nam,rockafellarbook} for more details. Given a proper (i.e., with $\dom g:=\{x\in\R^n\;|\;g(x)<\infty\}\ne\emp$), l.s.c., convex function $g \colon\R^n\to\oR:=(-\infty,\infty]$, the {\em subdifferential} of $g$ at $x\in\dom g$ is defined by
\begin{equation*}
\partial g(x):=\big\{v\in\R^n\;\big|\;\dotproduct{v,y-x}\le g(y)-g(x)\;\text{ for all }\;y\in\R^n\big\}.
\end{equation*}
For any proximal parameter $\lambda >0$, the {\it Moreau envelope} $e_{\lambda}g \colon\R^n\rightarrow\R$ and the {\it proximal mapping} $\textit{\rm Prox}_{\lambda g}\colon\R^n\rightrightarrows\R^n$ are defined, respectively, by
\begin{equation}\label{Moreau}
\begin{array}{ll}
e_{\lambda}g (x):=\inf_{y \in \R^n}\set{g (y)+\dfrac{1}{2\lambda}\norm{y-x}^2},\\\prox_{\lambda g }(x):=\mathop{\rm argmin}_{y\in \R^n}\set{g (y)+\dfrac{1}{2\lambda}\norm{y-x}^2}.
\end{array}
\end{equation}

The following result taken from \cite[Proposition 12.28, 12.29]{bauschkebook} and \cite[Theorem~2.26, Proposition~12.19]{rockafellarbook} presents remarkable properties of Moreau envelopes and proximal mappings for convex functions that allow us to pass from convex nonsmooth optimization problems with extended-valued objectives (i.e., incorporating constraints) to problems of ${\cal C}^{1,1}$ optimization.

\begin{Lemma}\label{rela} Let $g\colon\R^n\to\oR$ be a proper, l.s.c., convex function, and let $\lambda>0$. Then the Moreau envelope $e_\lambda g $ is convex, $\mathcal{C}^1$-smooth, and has the Lipschitz continuous gradient on $\R^n$ with constant $1/\lambda$ that satisfies the relationship
\begin{equation}\label{gradientMoreau}
\nabla e_\lambda g (x)=\lambda^{-1}\Big(x-\text{\rm Prox}_{\lambda g }(x)\Big)\;\mbox{ for all }\;x\in\R^n.
\end{equation}
Furthermore, the proximal mapping $\prox_{\lambda g}$ is Lipschitz  continuous on $\R^n$ with constant $1$, and every stationary point of $e_\lambda g$ is a minimizer of $g$.
\end{Lemma}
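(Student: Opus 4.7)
The plan is to treat the four assertions in the order: (a) well-posedness and firm nonexpansiveness of $\prox_{\lambda g}$, (b) convexity of $e_\lambda g$ as a marginal function, (c) differentiability with the stated gradient formula, (d) Lipschitz continuity of $\nabla e_\lambda g$, and (e) the identification of stationary points of $e_\lambda g$ with minimizers of $g$. Throughout I would write $p(x):=\prox_{\lambda g}(x)$ and rely on the subdifferential calculus of convex analysis available in \cite{bauschkebook,rockafellarbook}.

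First I would observe that for each $x$ the map $y\mapsto g(y)+\tfrac{1}{2\lambda}\|y-x\|^2$ is proper, l.s.c., strongly convex with modulus $1/\lambda$, and coercive, so it admits a unique minimizer; this gives single-valuedness of $\prox_{\lambda g}$. The optimality condition, derived via the sum rule for $\partial$ of a convex function and a smooth convex function, is $\lambda^{-1}(x-p(x))\in\partial g(p(x))$. Using this inclusion together with the monotonicity of $\partial g$ for two points $x_1,x_2$ gives the firm nonexpansiveness estimate $\|p(x_1)-p(x_2)\|^2\le \langle x_1-x_2, p(x_1)-p(x_2)\rangle$, which by Cauchy--Schwarz yields the $1$-Lipschitz assertion for $\prox_{\lambda g}$ and, by a short expansion of $\|(x_1-p(x_1))-(x_2-p(x_2))\|^2$, the $1$-Lipschitz property of $I-\prox_{\lambda g}$. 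Convexity of $e_\lambda g$ then follows immediately from the joint convexity of $(x,y)\mapsto g(y)+\tfrac{1}{2\lambda}\|y-x\|^2$ and the standard fact that marginalizing a jointly convex function in one variable gives a convex function in the other.

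The core step, and the one I expect to absorb most of the effort, is establishing the differentiability formula $\nabla e_\lambda g(x)=\lambda^{-1}(x-p(x))$. The plan is a sandwich argument: from the feasibility of $p(x)$ in the definition of $e_\lambda g(x')$, an expansion of $\|p(x)-x'\|^2-\|p(x)-x\|^2$ gives the upper bound
\[
e_\lambda g(x')-e_\lambda g(x)\;\le\;\bigl\langle \lambda^{-1}(x-p(x)),\,x'-x\bigr\rangle+\tfrac{1}{2\lambda}\|x'-x\|^2,
\]
and the symmetric inequality, combined with the $1$-Lipschitz property of $p(\cdot)$ already established, produces a matching lower bound up to a quadratic error. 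Letting $x'\to x$ shows $e_\lambda g$ is Fr\'echet differentiable at $x$ with the claimed gradient. The Lipschitz continuity of $\nabla e_\lambda g$ with constant $1/\lambda$ is then immediate from the $1$-Lipschitz property of $I-\prox_{\lambda g}$ derived in the first paragraph, since $\nabla e_\lambda g=\lambda^{-1}(I-\prox_{\lambda g})$.

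Finally, for the assertion on stationary points I would use the gradient formula: if $\nabla e_\lambda g(\bar x)=0$, then $p(\bar x)=\bar x$, and plugging this into the optimality inclusion $\lambda^{-1}(\bar x-p(\bar x))\in\partial g(p(\bar x))$ gives $0\in\partial g(\bar x)$, i.e., $\bar x$ is a minimizer of the convex function $g$. The technically delicate part is the quadratic sandwich for differentiability; everything else reduces to monotonicity of $\partial g$ and elementary manipulations with the definitions in \eqref{Moreau}.
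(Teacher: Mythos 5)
Your proposal is correct. Note, however, that the paper offers no proof of this lemma at all: it simply imports the statement from \cite[Propositions 12.28, 12.29]{bauschkebook} and \cite[Theorem 2.26, Proposition 12.19]{rockafellarbook}. What you have written is essentially the standard textbook derivation that underlies those cited results: existence and uniqueness of $\prox_{\lambda g}(x)$ from strong convexity and coercivity, the optimality inclusion $\lambda^{-1}(x-\prox_{\lambda g}(x))\in\partial g(\prox_{\lambda g}(x))$, firm nonexpansiveness of both $\prox_{\lambda g}$ and $I-\prox_{\lambda g}$ via monotonicity of $\partial g$, convexity of $e_\lambda g$ as the marginal of a jointly convex function, the quadratic sandwich for Fr\'echet differentiability, and the fixed-point characterization of stationary points. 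All steps check out; in particular, the upper estimate $e_\lambda g(x')-e_\lambda g(x)\le\langle\lambda^{-1}(x-\prox_{\lambda g}(x)),x'-x\rangle+\frac{1}{2\lambda}\|x'-x\|^2$ follows exactly as you describe by testing the infimum defining $e_\lambda g(x')$ at $y=\prox_{\lambda g}(x)$, and the symmetric inequality plus the $1$-Lipschitz property of $I-\prox_{\lambda g}$ gives the matching lower bound (in fact the cross term $\langle(I-\prox_{\lambda g})(x')-(I-\prox_{\lambda g})(x),x'-x\rangle$ is nonnegative by firm nonexpansiveness, so the lower bound needs only the $\frac{1}{2\lambda}\|x'-x\|^2$ correction). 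The only thing your route buys over the paper's is self-containedness; conversely, since $e_\lambda g$ is convex, you could shorten the differentiability step by observing that your upper estimate already exhibits $\lambda^{-1}(x-\prox_{\lambda g}(x))$ as a continuous selection of $\partial(e_\lambda g)$, which forces $\partial(e_\lambda g)$ to be single-valued and $e_\lambda g$ to be differentiable.
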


\subsection{Gradient-Based Inexact Proximal Point Method}

We consider here the class of optimization problems given in the form
\begin{align}\label{minimizing convex function}
{\rm minimize}\quad g(x)\;\text{ subject to }\; x\in\R^n,
\end{align}
where $g:\R^n\rightarrow\oR$ is a proper, l.s.c., and convex function. Although problems \eqref{optim prob} and \eqref{minimizing convex function} are written in the similar format, there is a huge difference between the ${\cal C}^{1,1}$ objective in \eqref{optim prob} and the extended-real-valued objective in
\eqref{minimizing convex function}. Besides different differential properties of the objective functions in \eqref{optim prob} and \eqref{minimizing convex function}, observe that \eqref{optim prob} is a problem of unconstrained optimization, while \eqref{minimizing convex function} incorporates constraints coming from $x\in\dom g$ when the function $g$ is extended-real-valued.\vspace*{0.05in} 

The following gradient-based inexact proximal point method/algorithm GIPPM is now proposed to solve the general class of nonsmooth convex optimization problems \eqref{minimizing convex function}.\\

\begin{longfbox}
\begin{Algorithm}[GIPPM]\hlabel{ppm}
\setcounter{Step}{-1}
\begin{Step}[initialization]\rm
Choose a proximal parameter $\lambda >0$, initial point $x^1\in \mathbb{R}^n$, initial error $\varepsilon_1>0$, error reduction factor $\theta\in(0,1)$, and  scaling factor $\mu>2$. Set $k:=1$.
\end{Step}
\begin{Step}[inexact proximal mapping]\rm\label{step 1 ppm} 
Find $p^k$ and the smallest natural number $i_k$ such that
\begin{align}\label{inexact proximal mapping}
\norm{p^k-\prox_{\lambda g}(x^k)}\le \lambda\theta^{i_k}\varepsilon_k\quad \text{ and }\quad \norm{x^k-p^k}>\lambda\mu \theta^{i_k}\varepsilon_k.
\end{align}
\end{Step}
\begin{Step}[error and iteration update]\rm\label{step 2 IPPMm}
Set $x^{k+1}:=p^k$, $\varepsilon_{k+1}:= \theta^{i_k}\varepsilon_k$. Increase $k$ by $1$ and go to Step~\ref{step 1 ppm}.
\end{Step}
\end{Algorithm}
\end{longfbox}

\begin{Remark}\rm
Similarly to the discussions in Remark~\ref{remark IGD}, the procedure of finding $p^k$ and $i_k$ that satisfy \eqref{inexact proximal mapping} can be given as follows. Set $i_k=0.$ Find some $p^k$ such that 
\begin{align*}
\norm{p^k-\prox_{\lambda g}(x^k)}\le\lambda \theta^{i_k}\varepsilon_k.
\end{align*}
While $\norm{x^k-p^k}\le\lambda \mu\theta^{i_k}\varepsilon_k$, increase $i_k$ by $1$ and recalculate $p^k$ under the condition above. Then the existence of $p^k$ and $i_k$ satisfying \eqref{inexact proximal mapping} is guaranteed when $0\notin \partial g(x^k)$. Indeed, otherwise for each $k\in\N$ we get a sequence $\set{p^k_i}$ as $i\in\N$ with
\begin{align*}
\norm{p^k_i-\prox_{\lambda g}(x^k)}\le \lambda\theta^i\varepsilon_k\text{ and }\norm{x^k-p^k_i}\le \lambda\mu \theta^i\varepsilon_k \text{ for all }i\in\N.
\end{align*}
Since $\theta\in(0,1),$ the latter inequalities mean that $x^k=\prox_{\lambda g}(x^k)$, which is equivalent to
\begin{align*}
x^k=\mathop{\rm argmin}_{x\in\R^n}\set{g(x^k)+\dfrac{1}{2\lambda}\norm{x-x^k}^2}.
\end{align*}
By using the subdifferential Fermat rule, we get $0\in\partial g(x^k),$ a contradiction.
\end{Remark}

Now we are ready to establish convergence properties of Algorithm~\ref{ppm}.

\begin{Theorem}\label{theorem ppm}
Let $\set{x^k}$ be the iterative sequence generated by Algorithm~{\rm\ref{ppm}} with $\mu>2$. Assume that $0\notin\partial g(x^k)$ for all $k\in\N.$ Then either $\norm{x^k}\rightarrow\infty$ as $k\to\infty$, or we have the assertions:

\begin{enumerate}[\bf(i)]
\item Every accumulation point of $\set{x^k}$ is a minimizer of $g$, and both sequences $\set{\norm{x^k-p^k}} and \set{\varepsilon_k}$ converge to $0$ as $k\to\infty$.

\item If $e_\lambda g$ satisfies the KL property at an accumulation point $\bar x$ of $\set{x^k}$, then $x^k\rightarrow\bar x$ as $k\to\infty$.

\item If $e_\lambda g$ satisfies the KL property at an accumulation point $\bar x$ of $\set{x^k}$ with $\psi(t)=Mt^{q}$ for some $M>0$ and $q\in(0,1)$, then  the following convergence rates are guaranteed as $k\to\infty$: 

$\bullet$ For $q\in(0,1/2]$, the sequence $\set{x^k}$ converges linearly to $\bar x$ as $k\to\infty$.

$\bullet$ For $q\in(1/2,1)$, we have $\norm{x^k-\bar x}=\mathcal{O}\big(k^{-\frac{1-q}{2q-1}}\big)$ as $k\to\infty$.
\end{enumerate}
\end{Theorem}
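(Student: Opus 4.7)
The plan is to recognize that Algorithm~\ref{ppm} is exactly the IGD method (Algorithm~\ref{al gd inexact}) applied to the Moreau envelope $f:=e_\lambda g$, and then invoke Theorem~\ref{convergence constant}. By Lemma~\ref{rela}, $f$ is $\mathcal{C}^1$-smooth with an $L$-Lipschitzian gradient for $L=1/\lambda$, and $\nabla f(x)=\lambda^{-1}(x-\prox_{\lambda g}(x))$. The natural candidate for the inexact gradient is
\begin{equation*}
g^k:=\lambda^{-1}(x^k-p^k).
\end{equation*}
I would first verify that this choice makes \eqref{inexact proximal mapping} coincide with \eqref{calculate approx grad}: the bound $\|p^k-\prox_{\lambda g}(x^k)\|\le\lambda\theta^{i_k}\varepsilon_k$ rescales to $\|g^k-\nabla f(x^k)\|\le\theta^{i_k}\varepsilon_k$, while $\|x^k-p^k\|>\lambda\mu\theta^{i_k}\varepsilon_k$ rescales to $\|g^k\|>\mu\theta^{i_k}\varepsilon_k$. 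The update $x^{k+1}=p^k$ in Step~\ref{step 2 IPPMm} rewrites as $x^{k+1}=x^k-\lambda g^k=x^k-(1/L)g^k$, which is exactly the IGD step.

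Next I would verify the hypothesis $\nabla f(x^k)\ne 0$ needed to apply Theorem~\ref{convergence constant}. By \eqref{gradientMoreau}, $\nabla f(x^k)=0$ iff $x^k=\prox_{\lambda g}(x^k)$, and by the subdifferential Fermat rule applied to \eqref{Moreau} this is equivalent to $0\in\partial g(x^k)$. The standing assumption $0\notin\partial g(x^k)$ therefore guarantees $\nabla f(x^k)\ne 0$ for all $k\in\N$. With $\mu>2$, Theorem~\ref{convergence constant} applies to $f=e_\lambda g$ and yields either $\|x^k\|\to\infty$ or the conclusions of Theorem~\ref{theo norm con} with $\varepsilon_k\downarrow 0$.

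From here the three assertions follow by translation. For (i): Theorem~\ref{theo norm con}(i) gives $g^k\to 0$ and $\nabla f(x^k)\to 0$; then $\|x^k-p^k\|=\lambda\|g^k\|\to 0$, and continuity of $\nabla f$ together with $\nabla f(x^k)\to 0$ shows that any accumulation point $\bar x$ satisfies $\nabla e_\lambda g(\bar x)=0$, which by the last sentence of Lemma~\ref{rela} means $\bar x$ is a minimizer of $g$; finally $\varepsilon_k\downarrow 0$ is part of Theorem~\ref{convergence constant}. For (ii) and (iii), the KL property is assumed directly on $e_\lambda g=f$ at an accumulation point $\bar x$, so Theorem~\ref{theo norm con}(ii)--(iii) applied to $f$ deliver $x^k\to\bar x$ and the asserted linear rate for $q\in(0,1/2]$ and the sublinear rate $\|x^k-\bar x\|=\mathcal{O}(k^{-(1-q)/(2q-1)})$ for $q\in(1/2,1)$.

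No genuine obstacle is anticipated beyond the bookkeeping of the rescaling $g^k=\lambda^{-1}(x^k-p^k)$ and verifying the Fermat-rule equivalence that turns $0\notin\partial g(x^k)$ into $\nabla e_\lambda g(x^k)\ne 0$. The only subtlety worth stating explicitly is that the hypotheses of this theorem impose the KL property on the Moreau envelope $e_\lambda g$ rather than on $g$ itself, so the conclusions of Theorem~\ref{theo norm con} transfer directly without any additional smoothing analysis; the stationarity conclusion of part~(i) is then upgraded from ``stationary point of $e_\lambda g$'' to ``minimizer of $g$'' using the final clause of Lemma~\ref{rela}.
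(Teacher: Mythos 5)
Your proposal is correct and follows essentially the same route as the paper's own proof: define $g^k:=\lambda^{-1}(x^k-p^k)$, observe that \eqref{inexact proximal mapping} rescales exactly to \eqref{calculate approx grad} and that $x^{k+1}=p^k$ becomes the IGD step for $f=e_\lambda g$ with $L=1/\lambda$, and then invoke Theorem~\ref{convergence constant} together with Lemma~\ref{rela} to upgrade stationary points of $e_\lambda g$ to minimizers of $g$. Your explicit verification that $0\notin\partial g(x^k)$ yields $\nabla e_\lambda g(x^k)\ne 0$ via the Fermat rule is a small detail the paper leaves implicit, but it does not change the argument.
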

\begin{proof}
Define $f:=e_\lambda g$. By Lemma~\ref{rela}, we have that $\nabla f$ is Lipschitzian with constant $L=1/\lambda$ and 
\begin{align*}
\nabla f(x)=\lambda^{-1}\big(x-\prox_{\lambda g}(x)\big)\;\mbox{ for all }\;x\in\R^n.
\end{align*}
Defining $g^k:=\lambda^{-1}(x^k-p^k)$, the inequalities in \eqref{inexact proximal mapping} can be rewritten as
\begin{align*}
\norm{g^k-
\nabla f(x^k)}&=\norm{g^k-
\nabla e_\lambda g(x^k)}\\
&=
\norm{ \lambda^{-1}(x^k-p^k)-
\lambda^{-1}(x^k-{\rm Prox}_{\lambda g}(x^k))}\\
&=\lambda^{-1}\norm{{\rm Prox}_{\lambda g}(x^k)-p^k}\le\theta^{i_k} \varepsilon_{k}\;\mbox{ and}
\end{align*}
\begin{align*}
\norm{g^k}=\lambda^{-1}\norm{x^k-p^k}>\mu\theta^{i_k}\varepsilon_k,
\end{align*}
respectively.
Furthermore, it follows from Step~\ref{step 2 IPPMm} of Algorithm~\ref{ppm} that the iterative procedure in this algorithm can be represented by
\begin{align*}
x^{k+1}=p^k=x^k-\lambda g^k=x^k-\dfrac{1}{L}g^k,
\end{align*}
Therefore, $\set{x^k}$ is the iterative sequence generated by Algorithm~\ref{al gd inexact} with $f=e_\lambda g$. Then Theorem~\ref{convergence constant}(i) tells us that every accumulation point of $\set{x^k}$ is a stationary point of $e_\lambda g$, which is actually a minimizer of $g$ by Lemma~\ref{rela}. Finally, assertions (ii) and (iii) follow from Theorem~\ref{convergence constant}(ii), (iii), respectively.
\end{proof}

Next we discuss some features of Algorithm~\ref{ppm} and its relationships with other developments.

\begin{Remark}\rm Observe the following:
\begin{enumerate}[\bf (i)]
 \item By using the properties of semialgebraic functions listed in Remark~\ref{algebraic} and the Moreau envelope construction in \eqref{Moreau}, we can verify that the assumption on the KL property of $e_\lambda g$ in Theorem~\ref{convergence constant} is satisfied if $g$ is a semialgebraic function. 
 
 \item Applying Theorem~\ref{theo norm con} and using the arguments similar to Theorem~\ref{theorem ppm} allow us to get the convergence properties for Algorithm~\ref{ppm} if \eqref{inexact proximal mapping} is replaced by more general conditions
\begin{align*}
 \norm{p^k-\prox_{\lambda g}(x^k)}\le \nu_2\norm{x^k-p^k}\;\text{ with }\;\nu_2<\frac{1}{2},
\end{align*}
\begin{align*}
\norm{p^k-\prox_{\lambda g}(x^k)}\le \nu_1\norm{x^k-\prox_{\lambda g}(x^k)}\text{ with }\nu_1<\frac{1}{3}.
\end{align*}

\item Let us highlight important improvements of our GIPPM over the two versions of the classical inexact proximal point method (IPPM) of Rockafellar \cite{rockafellar76} applied to \eqref{minimize proximal} as follows:
\begin{align}
&x^{k+1}=p^k,\;\norm{p^k-\prox_{\lambda g}(x^k)}\le\delta_k,\; \sum_{k=1}^\infty\delta_k<\infty,\tag{A}\label{predetermined error}\\
&x^{k+1}=p^k,\;\norm{p^k-\prox_{\lambda g}(x^k)}\le \delta_k\norm{x^k-p^k},\;\sum_{k=1}^\infty\delta_k<\infty.\tag{B}\label{predetermined error 2}
\end{align}
To this end, we mentioned first that in deriving convergence results, our approach in Algorithm \ref{ppm} is by employing the IGD method applied to the Moreau envelope $e_\lambda g$, while the one by Rockafellar in \eqref{predetermined error} and \eqref{predetermined error 2} is using the properties related to the maximal monotonicity of the subgradient mapping $\partial g$. This creates various differences between our GIPPM and Rockafellar's IPPM. In comparison with \eqref{predetermined error} of IPPM, our GIPPM can achieve a linear convergence rate while the convergence rate of \eqref{predetermined error} is not established in \cite{rockafellar76}. In practice, the sequence of errors $\set{\delta_k}$ in \eqref{predetermined error} is usually chosen as $\delta_k:=ck^{-p}$ with $p>1$ and $c>0$. If $\delta_k$ is too small, the procedure of finding approximate proximal points $p^k$ takes a lot of time while if $\delta_k$ is too large, $p^k$ becomes unreliable. Note that in each iteration, the GIPPM algorithm verifies whether the approximate proximal point is acceptable and then decreases the error if necessary.

Regarding the version \eqref{predetermined error 2} of IPPM, it follows from the choice of $\delta_k$ that $\delta_k\rightarrow0$ as $k\to\infty$. This {ensures} that for any $\nu_2<\frac{1}{2}$ there is $N\in\N$ with $\delta_k<\nu_2$, which gives us
\begin{align*}
\norm{p^k-\prox_{\lambda g}(x^k)}\le \nu_2\norm{x^k-p^k}\;\text{ as }\;k\ge N. 
\end{align*}
Therefore, the convergence properties of the algorithm with the procedure \eqref{predetermined error 2} can be deduced from (ii). In addition to the advantage on the generality in convergence analysis, our GIPPM method is also promising in practical implementation. Indeed, since $\nu_2$ is a constant while $\delta_k$ is decreasing to $0$, the procedure of finding $p^k$ under the conditions in \eqref{inexact proximal mapping} of Algorithm~\ref{ppm} takes significantly less time than the procedure \eqref{predetermined error 2} of IPPM while maintaining impressive convergence results achieved in Theorem~\ref{theorem ppm}.
\end{enumerate}
\end{Remark}

\subsection{Gradient-Based Inexact Augmented Lagrangian Method}

In this subsection, we propose the gradient-based inexact  augmented Lagrangian method/algorithm GIALM to solve equality-constrained convex programs given in the form:
\begin{equation}
\tag{P} \label{main problem}
\begin{split}
&{\rm minimize}\quad h(x)\\ 
 &{\rm subject\  to}\quad Ax=b, 
\end{split}
\end{equation}
where $h\colon\mathbb{R}^n\rightarrow\overline{\mathbb{R}}$ is a proper, l.s.c., convex function, and where $A\in\mathbb{R}^{m\times n},\;b\in\mathbb{R}^m$. Assume in what follows that the feasible set $\set{x\in\R^n\;|\;Ax=b}$ of \eqref{main problem} is nonempty. Given a parameter $\lambda>0$, for each pair $(x, y )\in\mathbb{R}^n\times \mathbb{R}^m$, consider the {\em Lagrange function} and the {\em augmented Lagrangian} defined as in \cite{hestenes69}, respectively, by
\begin{align}\label{lagrange}
\ell(x,y):=h(x)+\dotproduct{ y ,Ax-b}\;\text{ and }\; \mathcal{L}_\lambda(x, y):&=\ell(x,y)+\dfrac{\lambda}{2}\norm{Ax-b}^2.
\end{align}
The {\em dual Lagrange function} $d\colon\R^m\rightarrow [-\infty,\infty)$ is given by $d(y ):=\inf_{x\in\mathbb{R}^n} \ell(x,y)$, and the corresponding {\em dual problem} is formulated as
\begin{equation}
\label{dual problem alm}\tag{D}
\begin{split}
&{\rm maximize}\quad d(y)\\
&{\rm subject\ to}\quad y\in\R^m.
\end{split}
\end{equation}
It follows from \cite[Lemma~9]{jonathan15} that the function $g:=-d$ is l.s.c.\ and convex. Therefore, when $g$ is proper, its proximal mapping is well-defined and can be represented via the dual function $d$ as
\begin{align}\label{proxalm}
\prox_{\lambda g}(y)=\mathop{\rm argmin}_{w\in\R^m}\set{g(w)+\dfrac{1}{2\lambda}\norm{w-y}^2}=\mathop{\rm argmax}_{w\in\R^m}\set{d(w)-\dfrac{1}{2\lambda}\norm{w-y}^2}.
\end{align}

Now we present the construction of the gradient-based inexact augmented Lagrangian method.\\

\begin{longfbox}
 \begin{Algorithm}[GIALM]\rm\label{IAL}
\quad
\setcounter{Step}{-1}
\begin{Step}\rm
Choose a starting point $y ^1\in\mathbb{R}^m$, proximal parameter $\lambda>0$, initial error $\varepsilon_1>0$, error reduction factor $\theta \in (0,1)$, and scaling factor $\mu>2$. Set $k:=1.$ 
\end{Step}
\begin{Step}\rm Find some
$x^{k+1}$ and the smallest natural number $i_k$ such that
\begin{align}\label{subproblem}
\mathcal{L}_{\lambda}(x^{k+1},y^k)-\inf_{x\in\R^n} \mathcal{L}_{\lambda}(x,y^k)\le \dfrac{\lambda\theta^{2i_k}\varepsilon_k^2}{2}\;\text{ and }\;\norm{b-Ax^{k+1}}>\mu \theta^{i_k}\varepsilon_k.
\end{align}
\end{Step}
\begin{Step}\rm
Set $y^{k+1}:=y^k+\lambda(Ax^{k+1}-b)$ and $\varepsilon_{k+1}:=\theta^{i_k}\varepsilon_k$. Increase $k$ by $1$ and go back to Step 1.
\end{Step}
\end{Algorithm}
\end{longfbox}\vspace*{0.2in}

As a part of our convergence analysis for GIALM, we derive the following two propositions of their independent interest. The first proposition provides a useful representation of the augmented Lagrangian function different from \eqref{lagrange}.

\begin{Proposition}\label{two lagrange rela}
For any $z\in \mathbb{R}^n$, $\eta \in \mathbb{R}^m$, and $\lambda>0$, we have the representation
\begin{align*}
\mathcal{L}_\lambda(z,\eta)=\sup_{w\in\R^n}\set{\ell(z,w)-\dfrac{1}{2\lambda}\norm{w-\eta}^2}.
\end{align*}
\end{Proposition}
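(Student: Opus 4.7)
The plan is to evaluate the supremum on the right-hand side directly and show it collapses to the left-hand side, by recognizing it as the unconstrained maximization of a strictly concave quadratic function of $w$.

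First, I would substitute the definition of $\ell$ and observe that for fixed $z$ and $\eta$ the objective inside the supremum splits as
\begin{align*}
\ell(z,w)-\dfrac{1}{2\lambda}\norm{w-\eta}^2
= h(z)+\dotproduct{w,Az-b}-\dfrac{1}{2\lambda}\norm{w-\eta}^2,
\end{align*}
so the term $h(z)$ factors out of the supremum and the remaining problem in $w\in\R^m$ is unconstrained maximization of a strictly concave quadratic. Note that the ``$w\in\R^n$'' in the statement is a typographical slip; the natural dual variable lives in $\R^m$, consistent with the definition of $\ell$ in \eqref{lagrange}.

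Next, I would compute the unique maximizer by setting the gradient with respect to $w$ equal to zero, which yields $(Az-b)-\lambda^{-1}(w-\eta)=0$, hence $w^{*}=\eta+\lambda(Az-b)$. Plugging $w^{*}$ back into the quadratic and simplifying the inner product and the squared-norm term gives
\begin{align*}
\dotproduct{w^{*},Az-b}-\dfrac{1}{2\lambda}\norm{w^{*}-\eta}^2
=\dotproduct{\eta,Az-b}+\dfrac{\lambda}{2}\norm{Az-b}^2,
\end{align*}
where the crossterm $\lambda\norm{Az-b}^2$ from the inner product is partially absorbed by the $-\tfrac{\lambda}{2}\norm{Az-b}^2$ produced by the quadratic penalty evaluated at $w^{*}$.

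Combining these calculations, the supremum equals
\begin{align*}
h(z)+\dotproduct{\eta,Az-b}+\dfrac{\lambda}{2}\norm{Az-b}^2
=\ell(z,\eta)+\dfrac{\lambda}{2}\norm{Az-b}^2
=\mathcal{L}_\lambda(z,\eta),
\end{align*}
which is exactly the claimed identity. There is no genuine obstacle here beyond careful arithmetic: strict concavity in $w$ guarantees the unique optimizer and closed-form evaluation, and the identity is essentially the Fenchel-type duality between a quadratic penalty and its Moreau-style dual. I would make sure to state explicitly that the supremum is attained (so the ``sup'' can be written as ``max''), which follows from the coercivity of $w\mapsto -\tfrac{1}{2\lambda}\norm{w-\eta}^2$ dominating the linear term.
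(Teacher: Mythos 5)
Your proof is correct and follows essentially the same route as the paper: both recognize the inner objective as a concave quadratic in $w$, locate the unique maximizer $w^{*}=\eta+\lambda(Az-b)$ from the first-order condition, and substitute back to recover $\mathcal{L}_\lambda(z,\eta)$. Your remark that the ``$w\in\R^n$'' in the statement should read $w\in\R^m$ is a valid observation about a typographical slip, and your note on attainment of the supremum is a harmless refinement.
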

\begin{proof}
Taking into account the construction of $\ell$ in \eqref{lagrange}, we see that 
\begin{align*}
w\mapsto\ell(z,w)-\dfrac{1}{2\lambda}\norm{w-\eta}^2=h(z)+\dotproduct{w,Az-b}-\dfrac{1}{2\lambda}\norm{w-\eta}^2
\end{align*}
is a quadratic concave function, and thus it attains the global maximum when $Az-b=\frac{1}{\lambda}(w-\eta)$, i.e., at $w=\lambda(Az-b)+\eta$. Thus we have
\begin{align*}
\sup_w\set{\ell (z,w)-\dfrac{1}{2\lambda}\norm{w-\eta}^2} =\ &h(z)+\big\la\lambda(Az-b)+\eta,Az-b\big\ra-\dfrac{1}{2\lambda}\norm{\lambda(Az-b)}^2\\
=\ &h(z)+\dotproduct{\eta,Az-b}+\dfrac{\lambda}{2}\norm{Az-b}^2\\
=\ &\mathcal{L}_\lambda(z,\eta),
\end{align*}
which therefore verifies the claim of the proposition.
\end{proof}

Next we obtain a relationship between the augmented Lagrangian \eqref{lagrange} and the proximal mapping \eqref{proxalm}. The result of this type was stated in \cite[Proposition~6]{rockafellar alm} but only for the iterative sequence of the inexact augmented Lagrangian method for convex programs with inequality constraints. Using a similar technique, a general result for problem \eqref{main problem} without considering a specific iterative sequence is presented in the next proposition.

\begin{Proposition}\label{theo rockafellar}
Suppose that the function $g$ defined above is proper. Then we have
\begin{align*}
\dfrac{1}{2\lambda}\norm{y+\lambda(Ax-b)-\prox_{\lambda g}(y)}^2 \le  \mathcal{L}_{\lambda}(x,y)-\inf_{z\in\R^n} \mathcal{L}_{\lambda}(z,y)
\end{align*}
for all pairs  $(x,y)\in\R^n\times \R^m$.
\end{Proposition}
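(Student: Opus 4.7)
The plan is to evaluate $\mathcal{L}_\lambda(x,y)$ and $\inf_z \mathcal{L}_\lambda(z,y)$ separately and then compare. Let me set $\bar w := \prox_{\lambda g}(y)$ and $u := y + \lambda(Ax - b)$, so the target inequality becomes $\frac{1}{2\lambda}\norm{u - \bar w}^2 \le \mathcal{L}_\lambda(x,y) - \inf_z \mathcal{L}_\lambda(z,y)$. For the first quantity, I would apply Proposition~\ref{two lagrange rela} at $(x,y)$: the proof of that proposition identifies the maximizer of the inner supremum as $w = y + \lambda(Ax-b) = u$, hence
\[
\mathcal{L}_\lambda(x,y) = \ell(x,u) - \tfrac{1}{2\lambda}\norm{u - y}^2.
\]

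For the infimum, I would establish the strong-duality identity
\[
\inf_z \mathcal{L}_\lambda(z,y) \;=\; d(\bar w) - \tfrac{1}{2\lambda}\norm{\bar w - y}^2
\]
by Fenchel--Rockafellar duality applied to the decomposition $\mathcal{L}_\lambda(z,y) = h(z) + q(Az)$ with $q(v) := \dotproduct{y, v-b} + \frac{\lambda}{2}\norm{v-b}^2$. Direct conjugate calculations give $q^*(w) = \dotproduct{b,w} + \frac{1}{2\lambda}\norm{w-y}^2$ and, using the definition $d(w) = \inf_z\{h(z) + \dotproduct{w, Az-b}\}$, one gets $h^*(-A^\top w) = g(w) - \dotproduct{b,w}$. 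Their sum therefore equals $\varphi_y(w) := g(w) + \frac{1}{2\lambda}\norm{w-y}^2$, whose minimizer is $\bar w$ by Lemma~\ref{rela}. The qualification condition for strong duality is automatic because $q$ is continuous and finite-valued on all of $\R^m$.

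Combining the two evaluations yields
\[
\mathcal{L}_\lambda(x,y) - \inf_z \mathcal{L}_\lambda(z,y) \;=\; \ell(x,u) - d(\bar w) + \tfrac{1}{2\lambda}\norm{\bar w - y}^2 - \tfrac{1}{2\lambda}\norm{u - y}^2.
\]
The polarization identity $\norm{\bar w - y}^2 - \norm{u - y}^2 = \norm{\bar w - u}^2 - 2\dotproduct{u - \bar w,\, u - y}$, together with $u - y = \lambda(Ax-b)$ and the elementary algebra $\ell(x,u) - \dotproduct{u - \bar w, Ax - b} = \ell(x,\bar w)$, simplifies the right-hand side to $\ell(x,\bar w) - d(\bar w) + \tfrac{1}{2\lambda}\norm{u - \bar w}^2$. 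The conclusion then follows from the trivial bound $\ell(x,\bar w) \ge \inf_z \ell(z,\bar w) = d(\bar w)$. The only genuinely nontrivial step is the strong duality above; this is bypassed effortlessly because $q$ is a real-valued continuous quadratic, making the Fenchel--Rockafellar constraint qualification automatically satisfied.
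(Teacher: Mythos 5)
Your proof is correct, and it reaches the conclusion by a genuinely different route from the paper's, although both arguments pivot on the same key identity
\begin{align*}
\inf_{z}\mathcal{L}_\lambda(z,y)=\sup_{w}\Big\{d(w)-\tfrac{1}{2\lambda}\norm{w-y}^2\Big\}=d\big(\prox_{\lambda g}(y)\big)-\tfrac{1}{2\lambda}\norm{\prox_{\lambda g}(y)-y}^2.
\end{align*}
The paper obtains this identity (for an arbitrary center $\eta$) by combining Proposition~\ref{two lagrange rela} with a minimax exchange of $\inf_z$ and $\sup_w$ via a saddle-point theorem, and then finishes without ever evaluating $\mathcal{L}_\lambda(x,y)$ explicitly: it uses the translation formula $\mathcal{L}_\lambda(x,\eta)=\mathcal{L}_\lambda(x,y)+\frac{1}{\lambda}\dotproduct{p-y,\eta-y}$ with $p=y+\lambda(Ax-b)$ and a clever choice $\eta=\prox_{\lambda g}(y)+y-p$ to produce the squared-distance term. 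You instead derive the identity only at $\eta=y$ by Fenchel--Rockafellar duality for the splitting $\mathcal{L}_\lambda(\cdot,y)=h+q\circ A$; your conjugate computations ($q^*(w)=\dotproduct{b,w}+\frac{1}{2\lambda}\norm{w-y}^2$ and $h^*(-A^\top w)=g(w)-\dotproduct{b,w}$) are correct, the qualification condition is indeed automatic since $q$ is finite-valued and continuous on $\R^m$ while $h$ is proper, and dual attainment at $\bar{w}=\prox_{\lambda g}(y)$ follows from the strong convexity of $g+\frac{1}{2\lambda}\norm{\cdot-y}^2$. You then close with direct algebra (the exact evaluation $\mathcal{L}_\lambda(x,y)=\ell(x,u)-\frac{1}{2\lambda}\norm{u-y}^2$ from the proof of Proposition~\ref{two lagrange rela}, polarization, and $\ell(x,\bar{w})\ge d(\bar{w})$, the last step being legitimate since $\bar{w}\in\dom g$ gives $d(\bar{w})>-\infty$). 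What your route buys is twofold: it replaces the minimax theorem by standard conjugate calculus with a transparent constraint qualification, and it yields the sharper exact decomposition $\mathcal{L}_\lambda(x,y)-\inf_z\mathcal{L}_\lambda(z,y)=\ell(x,\bar{w})-d(\bar{w})+\frac{1}{2\lambda}\norm{u-\bar{w}}^2$, which exhibits precisely the nonnegative slack discarded in the stated inequality; the paper's route buys brevity and avoids computing any conjugates, at the cost of the less motivated choice of $\eta$ at the end.
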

\begin{proof}
Fix any $(x,y)\in\R^n\times\R^m$ and denote $p=y+\lambda(Ax-b)$. Take an arbitrary $\eta\in\R^m$ and define the function $\vt\colon\R^n\times \R^m\rightarrow\R$ by
\begin{align*}
\vt(z,w):=\ell(z,w)-\dfrac{1}{2\lambda}\norm{w-\eta}^2=  h(z)+\dotproduct{w,Az-b}-\dfrac{1}{2\lambda}\norm{w-\eta}^2.
\end{align*}
It is clear that the function $\vt$ is l.s.c.\ convex in $z$, and continuous concave in $w$ with $\vt(0,w)\rightarrow-\infty$ as $\norm{w}\rightarrow\infty$. Applying Proposition~\ref{two lagrange rela} and then \cite[Theorem~2.7]{tuybook} to $\vt(z,w)$ with taking \cite[Remark~2.2]{tuybook} into account, we get the equalities
\begin{align}\label{using twice}
\inf_z \mathcal{L}_\lambda(z,\eta)&= \inf_z \sup_w \set{\ell(z,w)-\dfrac{1}{2\lambda}\norm{w-\eta}^2}\nonumber\\
&=\sup_w \inf _z \set{\ell(z,w)-\dfrac{1}{2\lambda}\norm{w-\eta}^2}\nonumber\\
 &=\sup_w \set{d(w)-\dfrac{1}{2\lambda}\norm{w-\eta}^2}.
\end{align}
Combining this with the construction of $\mathcal{L}_\lambda(x,y)$ in \eqref{lagrange} gives us
\begin{align}\label{dprox}
\mathcal{L}_\lambda(x,y)+\dfrac{1}{\lambda}\dotproduct{p-y,\eta-y}&=h(x)+\dotproduct{y,Ax-b}+\dfrac{\lambda}{2}\norm{Ax-b}^2+\dotproduct{Ax-b,\eta-y}\nonumber\\
&=h(x)+\dotproduct{\eta,Ax-b}+\dfrac{\lambda}{2}\norm{Ax-b}^2\nonumber\\
&=\mathcal{L}_\lambda (x,\eta)\ge \inf_z \mathcal{L}_\lambda(z,\eta)\nonumber\\
&= \sup_{w}\set{d(w)-\dfrac{1}{2\lambda}\norm{w-\eta}^2}\nonumber\\
&\ge d\big(\prox_{\lambda g}(y)\big)-\dfrac{1}{2\lambda}\norm{\prox_{\lambda g}(y)-\eta}^2.
\end{align}
Employing \eqref{using twice} again with $\eta:=y$ and taking into account the construction of the proximal mapping in \eqref{proxalm}, we arrive at the equalities
\begin{align*}
\inf_z \mathcal{L}_\lambda(z,y)&=\sup_w \set{d(w)-\dfrac{1}{2\lambda}\norm{w-y}^2}\\
&=d\big(\prox_{\lambda g}(y))\big)-\dfrac{1}{2\lambda}\norm{\prox_{\lambda g}(y)-y}^2.
\end{align*}
The latter expression combined with \eqref{dprox} tells us that
\begin{align*}
\mathcal{L}_\lambda(x,y)-\inf_z \mathcal{L}_\lambda(z,y)&\ge \dfrac{1}{2\lambda}\brac{\norm{\prox_{\lambda g}(y)-y}^2-\norm{\prox_{\lambda g}(y)-\eta}^2-2\dotproduct{p-y,\eta-y}}.
\end{align*}
Choosing finally $\eta:=\prox_{\lambda g}(y)+y-p$ brings us to
\begin{align*}
\mathcal{L}_\lambda(x,y)-\inf_z \mathcal{L}_\lambda(z,y)&\ge \dfrac{1}{2\lambda}\brac{\norm{\prox_{\lambda g}(y)-y}^2-\norm{p-y}^2-2\dotproduct{p-y,\prox_{\lambda g}(y)-p}}\\
 &=\dfrac{1}{2\lambda}\norm{p-\prox_{\lambda g}(y)}^2,
\end{align*}
which therefore completes the proof of the proposition.
\end{proof}

\begin{Remark}\rm
The procedure of finding $x^{k+1}$ and $i_k$ in Step~1 of Algorithm~\ref{IAL} can be performed similarly to Algorithm~\ref{al gd inexact} and Algorithm~\ref{ppm} in the following way. Set $i_k=0$ and find some $x^{k+1}$ with
\begin{align}\label{find xk+1 gk}
\mathcal{L}_{\lambda}(x^{k+1},y^k)-\inf_{x\in\R^n} \mathcal{L}_{\lambda}(x,y^k)\le \dfrac{\lambda\theta^{2i_k}\varepsilon_k^2}{2}.
\end{align}
While $\norm{b-Ax^{k+1}}\le \mu \theta^{i_k}\varepsilon_k,$ increase $i_k$ by $1$ and recalculate $x^{k+1}$ under condition \eqref{find xk+1 gk}. This procedure terminates finitely when the function $g$ defined above is proper, and when $y^k$ is not a solution to \eqref{dual problem alm}. Indeed, if \eqref{find xk+1 gk} does not stop, for each $k\in\N$ we get $\set{x^{k+1}_i}$, $i\in\N$, such that
\begin{align}\label{two alm}
\mathcal{L}_{\lambda}(x^{k+1}_i,y^k)-\inf_{x\in\R^n} \mathcal{L}_{\lambda}(x,y^k)\le \dfrac{\lambda\theta^{2i}\varepsilon_k^2}{2} \quad \text{ and }\quad\norm{b-Ax^{k+1}_i}\le \mu \theta^{i}\varepsilon\;\text{ for all }\;i\in\N.
\end{align}
Combining the first inequality in \eqref{two alm} with Proposition~\ref{theo rockafellar} gives us
\begin{align*}
\dfrac{1}{2\lambda}\norm{y^k+\lambda(Ax^{k+1}_i-b)-\prox_{\lambda g}(y^k)}^2 \le \dfrac{\lambda\theta^{2i}\varepsilon_k^2}{2}.
\end{align*}
Since $\theta\in (0,1)$, letting $i\rightarrow\infty$ with taking into account the second inequality in \eqref{two alm}, we get that  $y^k=\prox_{\lambda g}(y^k)$, which implies by the subdifferential Fermat rule that $0\in\partial g(y^k)$. Therefore, $y^k$ is a solution to the dual problem \eqref{dual problem alm} due to its convexity. This is a contradiction. 
\end{Remark}

Now we are ready to establish the convergence results of our GIALM algorithm.

\begin{Theorem}\label{theo alm}
Let the sequences $\set{x^k}$, $\set{y^k}$ are generated by Algorithm~{\rm\ref{IAL}}. Assume that $\sup_{y\in\R^m} d(y)>-\infty$ and $y^k$ is not a solution to \eqref{dual problem alm} whenever $k\in\N$. Then either $\norm{y^k}\rightarrow\infty$ as $k\to\infty$, or we have:
\begin{enumerate}
[\bf (i)]
\item Every accumulation point of $\set{y^k}$ is a solution to \eqref{dual problem alm}. If in addition the sequence $\set{y^k}$ is bounded, then every accumulation point of $\set{x^k}$ is a solution to \eqref{main problem}.

\item If $e_\lambda g$ satisfies the KL property at an accumulation point $\bar y$ of $\set{y^k}$, then $y^k\rightarrow\bar y$ as $k\to\infty$.

\item If $e_\lambda g$ satisfies the KL property at an accumulation point {$\bar y$ of $\set{y^k}$} with $\psi(t)=Mt^{q}$ for some $M>0$ and $q\in (0,1)$, then we get the convergence rates:

$\bullet$ If $q\in(0,1/2]$, then the sequence $\set{y^k}$ converges linearly to $\bar y$ as $k\to\infty$.

$\bullet$ If $q\in(1/2,1)$, then $\|y^k-\bar y\|={\cal O}\big(k^{-\frac{1-q}{2q-1}}\big)$ as $k\to\infty$.
\end{enumerate}
\end{Theorem}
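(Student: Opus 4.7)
The plan is to reduce this theorem to Theorem~\ref{theorem ppm} (GIPPM convergence) applied to the proper, l.s.c., convex function $g=-d$ on the dual space $\R^m$, by showing that the dual sequence $\set{y^k}$ generated by GIALM is precisely an inexact proximal iteration of the form produced by Algorithm~\ref{ppm} for $g$. First I would verify that $g$ is proper, which follows from $\sup_{y} d(y)>-\infty$ together with the convexity-lower-semicontinuity of $g$ established in \cite[Lemma~9]{jonathan15}. Then the update rule in Step~2 of Algorithm~\ref{IAL} reads $y^{k+1}-y^k=\lambda(Ax^{k+1}-b)$, so that the second inequality in \eqref{subproblem} translates directly into
\begin{align*}
\norm{y^{k+1}-y^k}=\lambda\norm{Ax^{k+1}-b}>\lambda\mu\theta^{i_k}\varepsilon_k,
\end{align*}
matching the second condition in \eqref{inexact proximal mapping}.

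For the first condition in \eqref{inexact proximal mapping} I would invoke Proposition~\ref{theo rockafellar} with $(x,y)=(x^{k+1},y^k)$, which combined with the first inequality in \eqref{subproblem} yields
\begin{align*}
\dfrac{1}{2\lambda}\norm{y^{k+1}-\prox_{\lambda g}(y^k)}^2\le\mathcal{L}_\lambda(x^{k+1},y^k)-\inf_{z}\mathcal{L}_\lambda(z,y^k)\le\dfrac{\lambda\theta^{2i_k}\varepsilon_k^2}{2},
\end{align*}
so that $\norm{y^{k+1}-\prox_{\lambda g}(y^k)}\le\lambda\theta^{i_k}\varepsilon_k$, as required. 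The assumption that $y^k$ is not a solution to \eqref{dual problem alm} means $0\notin\partial g(y^k)$, so the standing hypothesis of Theorem~\ref{theorem ppm} is in force. Identifying $p^k=y^{k+1}$, Theorem~\ref{theorem ppm} applied to $g=-d$ then delivers directly: the assertion that every accumulation point of $\set{y^k}$ is a minimizer of $g$ (equivalently a solution of \eqref{dual problem alm}), the convergence $y^k\to\bar y$ under KL at $\bar y$ in (ii), and the convergence rates in (iii).

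It remains to handle the primal-side claim in (i), which does not follow mechanically from Theorem~\ref{theorem ppm}. Assuming $\set{y^k}$ bounded, Theorem~\ref{theorem ppm}(i) gives $\varepsilon_k\downarrow 0$ and $\norm{y^{k+1}-y^k}\to 0$, whence $\norm{Ax^{k+1}-b}=\lambda^{-1}\norm{y^{k+1}-y^k}\to 0$. If $x^{k_j}\to\bar x$ along a subsequence, the continuity of $A$ yields $A\bar x=b$, establishing feasibility. For optimality I would exploit the approximate minimization in \eqref{subproblem}: for any feasible $x$ (so $Ax=b$),
\begin{align*}
h(x^{k_j+1})+\dotproduct{y^{k_j},Ax^{k_j+1}-b}+\dfrac{\lambda}{2}\norm{Ax^{k_j+1}-b}^2\le h(x)+\dfrac{\lambda\theta^{2i_{k_j}}\varepsilon_{k_j}^2}{2}.
\end{align*}
Since $\set{y^{k_j}}$ is bounded and $Ax^{k_j+1}-b\to 0$, the cross term vanishes in the limit, and lower semicontinuity of $h$ gives $h(\bar x)\le\liminf_j h(x^{k_j+1})\le h(x)$, so $\bar x$ solves \eqref{main problem}.

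The main obstacle is the reduction step: matching the subproblem accuracy in \eqref{subproblem} (phrased in terms of the augmented Lagrangian gap) with the proximal-distance condition \eqref{inexact proximal mapping} required by GIPPM. This is exactly what Proposition~\ref{theo rockafellar} is designed to provide, and once the identification is in place the remaining arguments are either immediate consequences of Theorem~\ref{theorem ppm} or a short lower-semicontinuity passage to the limit on the primal side.
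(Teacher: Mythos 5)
Your proposal is correct and follows essentially the same route as the paper: identify $p^k:=y^k+\lambda(Ax^{k+1}-b)=y^{k+1}$, use Proposition~\ref{theo rockafellar} to convert the augmented Lagrangian gap in \eqref{subproblem} into the proximal-distance condition \eqref{inexact proximal mapping}, invoke Theorem~\ref{theorem ppm} for the dual sequence, and finish the primal claim by feasibility from $Ax^k-b\to 0$ plus lower semicontinuity of $h$. The only nitpick is an index shift in the optimality step: since the subsequence converges as $x^{k_j}\to\bar x$, you should evaluate the subproblem inequality at iteration $k_j-1$ (i.e., with $h(x^{k_j})$ and $\langle y^{k_j-1},Ax^{k_j}-b\rangle$, as the paper does) so that the lower semicontinuity of $h$ is applied along the convergent subsequence itself.
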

\begin{proof}
(i) Since $\sup_{y\in\R^m}d(y)>-\infty$, the function $g=-d$ in \eqref{dual problem alm} is proper. It follows from Proposition~\ref{theo rockafellar} and the first inequality in \eqref{subproblem} that
{\begin{align*}
\dfrac{1}{2\lambda}\norm{y^k+\lambda(Ax^{k+1}-b)-\prox_{\lambda g}(y^k)}^2\le\dfrac{\lambda\theta^{2i_k}\varepsilon_k^2}{2}.
\end{align*}}
Defining $p^k:=y^k+\lambda(Ax^{k+1}-b)$, the latter can be equivalently written as
\begin{align*}
\norm{p^k-\prox_{\lambda g}(y^k)}\le \lambda \theta^{i_k}\varepsilon_k\;\text{ for all }\;k\in\N.
\end{align*}
Moreover, the second inequality in \eqref{subproblem} also tells us that
{\begin{align*}
\norm{y^k-p^k}>\lambda\mu \theta^{i_k}\varepsilon_k\;\text{ for all }\;k\in\N.
\end{align*}}
Therefore, $\set{y^k}$ is the iterative sequence generated by Algorithm \ref{ppm} to find minimizers of $g$. Then Theorem \ref{theorem ppm}(i) tells us that every accumulation point of $\set{y^k}$ is a minimizer of $g$, i.e., it is a solution to \eqref{dual problem alm}. Furthermore, we have $\varepsilon_k\downarrow0$ and $\norm{y^k-p^k}\rightarrow 0$ as $k\rightarrow\infty$. 

Assume next that $\set{y^k}$ is bounded and show that every accumulation point of $\set{x^k}$ is a solution to \eqref{main problem}. Since $\norm{y^k-p^k}\rightarrow0$, we deduce from the construction of $p^k$ that
\begin{align}\label{feasible}
Ax^k-b\rightarrow0\;\text{ as }\;k\rightarrow\infty.
\end{align} 
Picking any feasible solution $x^*$ to \eqref{main problem} tells us that $Ax^*=b$. It follows from the first inequality in \eqref{subproblem} and Step~2 of Algorithm~\ref{IAL} that
\begin{align*}
\disp\mathcal{L}_\lambda(x^{k+1},y^k)\le \inf_{x\in\R^n}\mathcal{L}_\lambda (x,y^k)+\dfrac{\lambda\varepsilon_{k+1}^2}{2}\le \disp\mathcal{L}_\lambda (x^*,y^k)+\dfrac{\lambda\varepsilon_{k+1}^2}{2}.
\end{align*} 
Combining this with the construction of $\mathcal{L}_\lambda$ in \eqref{lagrange}  and the feasibility of $x^*$ gives us 
\begin{align}\label{alm ineq}
h(x^{k+1})+\dotproduct{y^{k},Ax^{k+1}-b}+\dfrac{\lambda}{2}\norm{Ax^{k+1}-b}^2&\le h(x^*)+\dotproduct{y^{k},Ax^*-b}+\dfrac{\lambda}{2}\norm{Ax^*-b}^2+\dfrac{\lambda}{2}\varepsilon_{k+1}^2\nonumber\\
&= h(x^*)+\dfrac{\lambda}{2}\varepsilon_{k+1}^2\;\text{ for all }\;k\in\N.
\end{align}
Taking now any accumulation point $\bar x$ of $\set{x^k}$, we find some infinite subset $J\subset\N$ such that $x^k\overset{J}{\rightarrow}\bar x$. Then it follows from \eqref{feasible} that $A\bar x=b$, which verifies the feasibility of $\bar x$. Invoking \eqref{feasible} again together with the boundedness of $\set{y^k}$ gives us the convergence
\begin{align}
\dotproduct{y^{k-1},Ax^k-b}+\dfrac{\lambda}{2}\norm{Ax^k-b}^2\rightarrow 0\;\text{ as }\;k\rightarrow\infty.
\end{align}
Combining the latter with the lower semicontinuity of $h$, the estimate \eqref{alm ineq}, and the condition $\varepsilon_k\downarrow0$ tells us that
\begin{align*}
h(\bar x)&\le \liminf_{k\overset{J}{\rightarrow}\infty} h(x^k)\\
&=\liminf_{k\overset{J}{\rightarrow}\infty}\set{h(x^k)+\dotproduct{y^{k-1},Ax^k-b}+\dfrac{\lambda}{2}\norm{Ax^k-b}^2}\\
&\le \liminf_{k\overset{J}{\rightarrow}\infty}\set{h(x^*)+\dfrac{\lambda}{2}\varepsilon_{k}^2}=h(x^*).
\end{align*}
Since $x^*$ is any feasible solution to \eqref{main problem}, we conclude that $\bar x$ is an optimal solution to \eqref{main problem}.

Finally, assertions (ii) and (iii) follow from Theorem~\ref{theorem ppm}(ii), (iii), respectively.
\end{proof}

\section{Numerical Experiments}\label{sec:6}

In this section, we compare numerical aspects of the newly designed GIALM and the classical IALM in solving the basic Lasso problem given by
\begin{align}\label{primal}
\mathop{\rm minimize}\quad \dfrac{1}{2}\norm{Ax-b}^2+\gamma \norm{x}_1\;\text{ subject to }\; x\in\R^n, 
\end{align}
where $A$ is an $m\times n$ matrix, $b$ is a vector in $\R^m$, and $\gamma>0$. By omitting constants and defining $p(x):=\gamma \norm{x}_1$ and $c:=A^*b$, we observe that Lasso problem \eqref{primal} is equivalent to 
\begin{align}\label{transprim}
\mbox{maximize }\;-\dfrac{1}{2}\norm{Ax}^2+\dotproduct{c,x}-p(x),\quad x\in\R^n
\end{align}
This is the dual problem of the following primal convex program with linear equality constraints:
\begin{align}\label{dual}
&{\rm minimize}\quad \dfrac{1}{2}\norm{y}^2+p^*(z)\\
&{\rm subject\ to}\quad -A^*y-z=-c,\nonumber
\end{align}
where the indicator function $p^*(\cdot)=\delta_{\gamma\mathbb{B}^\infty}(\cdot)$ is the convex conjugate of $p$ with $\mathbb{B}^\infty:=\set{z\in\R^n\;|\;\max |z_i|\le \gamma}$. Indeed, the corresponding Lagrange function and the dual function of \eqref{dual} are given, respectively, by
\begin{align*}
\ell(y,z,x)=\dfrac{1}{2}\norm{y}^2+p^*(z)+\dotproduct{x,c-A^*y-z},
\end{align*}
\begin{align*}
d(x)&=\inf_{y,z}\ell(y,z,x)=\inf_{y}\set{\dfrac{1}{2}\norm{y}^2-\dotproduct{y,Ax}}+\inf_{z}\big\{p^*(z)-\dotproduct{x,z}\big\}+\dotproduct{c,x}\\
&=-\dfrac{1}{2}\norm{Ax}^2+\dotproduct{c,x}-p^{**}(x).
\end{align*}
Since $p$ is convex and continuous, we know from basic convex analysis that $p^{**}=p$, and thus $d$ is exactly the objective function in \eqref{transprim}. Since \eqref{dual} has the form of \eqref{main problem}, we can apply GIALM to solve \eqref{dual}, which brings us to the following algorithm.\\

\begin{longfbox}
 \begin{Algorithm}[GIALM for solving \eqref{dual}]
\quad
\setcounter{Step}{-1}
\begin{Step}\rm
Choose a starting point $ x^1 \in\mathbb{R}^n$, proximal parameter $\lambda>0,$ initial error $\varepsilon_1>0$, error reduction factor $\theta \in (0,1)$,  and scaling factor $\mu>2$. Set $k:=1$. 
\end{Step}
\begin{Step}\rm Find some
$(y^{k+1},z^{k+1})$ and the smallest natural number $i_k$ such that
{\begin{align}\label{subproblem numerical}
\mathcal{L}_{\lambda}(y^{k+1},z^{k+1},x^k)-\inf_{y,z} \mathcal{L}_{\lambda}(y,z,x^k)\le \dfrac{\lambda\theta^{2i_k}\varepsilon_k^2}{2} \quad \text{ and } \norm{A^*y^{k+1}+z^{k+1}-c}>\mu \theta^{i_k}\varepsilon_k.
\end{align}}
\end{Step}
\begin{Step}\rm
Set {$x^{k+1}:=x^k-\lambda(A^*y^{k+1}+z^{k+1}-c)$} and $\varepsilon_{k+1}:=\theta^{i_k}\varepsilon_k$. Increase $k$ by $1$ and go to Step 1.
\end{Step}
\end{Algorithm}
\end{longfbox}\vspace*{0.1in}
The augmented Lagrangian $\mathcal{L}_\lambda(y,z,x)$ in \eqref{subproblem numerical} is written now as
\begin{align*}
\mathcal{L}_\lambda(y,z,x)&=\dfrac{1}{2}\norm{y}^2+p^*(z)-\dotproduct{x,A^*y+z-c}+\dfrac{\lambda}{2}\norm{A^*y+z-c}^2.
\end{align*}
Given $i_k\in\N$, we employ the technique in \cite{lsk} to find $(y^{k+1},z^{k+1})$ that satisfy the first inequality in \eqref{subproblem numerical}. Define the mapping $\Psi_k:\R^m\rightarrow\R^n$ and the function $\psi_k:\R^m\rightarrow\R$ by
\begin{align}\label{psik Psik}
\Psi_k(y):=\mathop{\rm argmin}_{z}\mathcal{L}_{\lambda}(y,z,x^k)\;\text{ and }\;\psi_k(y):=\inf_z \mathcal{L}_\lambda(y,z,x^k).
\end{align}
It follows from \cite[Section~3.2]{lsk} that
\begin{align*}
\Psi_k(y)&=\prox_{p^*/\lambda}\brac{\dfrac{x^k}{\lambda}-A^*y+c},\\
\psi_k(y)&=  \dfrac{1}{2}\norm{y}^2+\dfrac{1}{2\lambda}\norm{\prox_{\lambda p}(x^k-\lambda (A^*y-c)) }^2-\dfrac{\norm{x^k}^2}{2\lambda}.
\end{align*}
In addition, $\psi_k$ is strongly convex with constant $1$ and $\mathcal{C}^1$-smooth with the Lipschitzian gradient
\begin{align*}
\nabla \psi_k(y)=y+A\prox_{\lambda p}(x^k-\lambda(A^*y+c))\;\text{ for all }\;y\in\R^m.
\end{align*}

Note that there are explicit formulas for calculating $\prox_{\lambda p}$ and $\prox_{p^*/\lambda}$; see, e.g., \cite[Section~6.9]{beckbook} and \cite{proximity}. However, we do not need these formulas and exact computations of proximal mappings in our algorithms and numerical experiments, since we only focus on solving \eqref{subproblem} {\em inexactly} to illustrate the efficiency of GIALM in what follows.\vspace*{0.05in}

To proceed further, deduce from the definitions of $\Psi_k$ and $\psi_k$ in \eqref{psik Psik} the relationships
\begin{align*}
\psi_k(y)=\mathcal{L}_\lambda(y,\Psi_k(y),x^k)\;\text{ and  }\;\inf_{y,z} \mathcal{L}_\lambda(y,z,x^k)=\inf_y\psi_k(y).
\end{align*}
As a consequence, $(y^{k+1},z^{k+1})$ satisfying the first inequality in \eqref{subproblem numerical} can be obtained from
\begin{align}\label{inf psik}
\psi_k(y^{k+1})-\inf_y \psi_k(y) \le \dfrac{\lambda\theta^{2i_k}\varepsilon_k^2}{2}\;\text{ and }\;z^{k+1}=\Psi_k(y^{k+1}).
\end{align}
The standard characterization of strong convexity gives us the error bound
\begin{align*}
\psi_k(y^{k+1})-\inf_y \psi_k(y)\le \frac{1}{2}\norm{\nabla \psi_k(y^{k+1})}^2,
\end{align*}
which implies that the first inequality in \eqref{inf psik} is satisfied if
\begin{align}\label{stop grad}
\norm{\nabla \psi_k(y^{k+1})}\le \omega_k:= \sqrt{\lambda}\theta^{i_k}\varepsilon_k.
\end{align}
Since $\nabla \psi_k$ is Lipschitz continuous, we can apply the gradient descent method to $\psi_k$ with the stepsize $1/L$, where $L$ is the Lipschitz constant of $\nabla \psi_k$, and find an approximate minimizer $y^{k+1}$ under \eqref{stop grad}. We also put $\varepsilon_1=1$ and $\theta=0.8$ in the initial setting of Algorithm~\ref{IAL}. The two selections of scaling factor are $\mu=3$ and $\mu=1.1,$ which correspond to the versions GIALM-3 and GIALM-1.1, respectively.

\medskip Now we recall the iterative procedure of classical IALM from \cite{rockafellar alm} to solve \eqref{dual problem alm}. Given an initial point $x^1\in\R^n$, IALM uses the following updates for $(y^{k+1},z^{k+1})$ and then $x^{k+1}$ at each iteration:
{\begin{align}\label{al-ine}
 \mathcal{L}_{\lambda}(y^{k+1},z^{k+1},x^k)-\inf_{y,z} \mathcal{L}_{\lambda}(y,z,x^k)\le \delta_k^2\;\text{ and }\;x^{k+1}=x^k-\lambda(A^*y^{k+1}+z^{k+1}-c),
\end{align}}
where the sequence $\set{\delta_k}$ is positive and summable. Using arguments similar to the above tells us that for each $k\in\N$ the pair $(y^{k+1},z^{k+1})$ satisfying the inequality in \eqref{al-ine} can be obtained from 
\begin{align*}
\norm{\nabla \psi_k(y^{k+1})}\le \omega_k:=\sqrt{2}\delta_k\;\text{ and }\;z^{k+1}=\Psi_k(y^{k+1}),
\end{align*}
where $\psi_k$ and $\Psi_k$ are given in \eqref{psik Psik}. In our implementation, we choose $\omega_k:=k^{-q}$ for all $k\in\N$ with some $q>1$ to ensure the summability of $\set{\delta_k}$. Consider also the two specific choices of $q=2$ and $q=3$, which correspond to the versions IALM-2 and IALM-1.5, respectively.\vspace*{0.03in}

Our two numerical experiments presented below are conducted by using a computer with the parameters: 10th Gen Intel(R), Core(TM), i5-10400 (6-Core 12M Cache, 2.9GHz to 4.3GHz), 16GB RAM memory, and the code written in MATLAB R2022b. 
\subsection{An Example in Image Processing}

The setup for the experiment in this subsection follows from that in \cite{beck09}. More specifically, we first assume the reflexive boundary conditions \cite{hansen} for the $256\times 256$ cameraman test image. Then we let this image go through a Gaussian blur of size $9\times 9$ and standard deviation $4$ followed by a standard Gaussian noise with standard deviation $10^{-3}$.  Then vector $b$ in \eqref{primal} represents the observed image, and $A$ is the blurred operator. The regularization parameter is chosen as $\gamma=10^{-4}$. The proximal parameter for the tested methods is chosen as $\lambda=5$, and the initial point is $x^1=b$. In this numerical experiment, we first generate an (approximate) optimal value $f^*$ with its associated (approximate) solution by running GIALM-1.1 in 1000 iterations. The original, blurred, and optimal images are presented in Figure~\ref{fig:optimal}.
\begin{figure}[H]
\centering
\includegraphics[width=.3\textwidth]{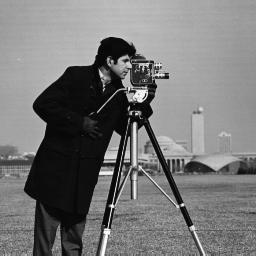}\quad
\includegraphics[width=.3\textwidth]{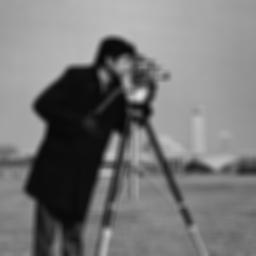}\quad
\includegraphics[width=.3\textwidth]{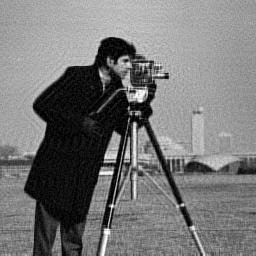} 
\caption{Deblurring of the cameraman}
\label{fig:optimal}
\end{figure}
Then we record the results obtained by the tested methods after running 500 iterations. Their function values compared with $f^*$ and the total number of iterations in the subproblems they execute are given in Figure~\ref{fig:cameraman result} below. It can be seen that GIALM methods have a better performance than the classical IALM methods. Indeed, the left graph shows that after around 5s, the values generated by the GIALM methods are always lower than those for the IALM methods. This is explained in more detail in the right graph, where we see that the IALM methods execute a much larger total number of gradient descent iterations in comparison with the subproblems of the GIALM methods.
\begin{figure}[H]
\centering
\includegraphics[width=.45\textwidth]{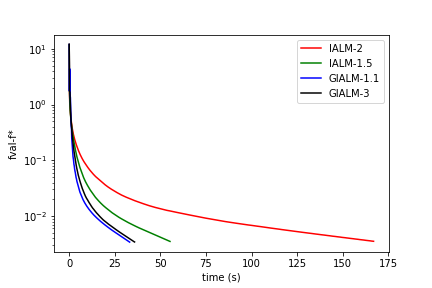}\quad
\includegraphics[width=.45\textwidth]{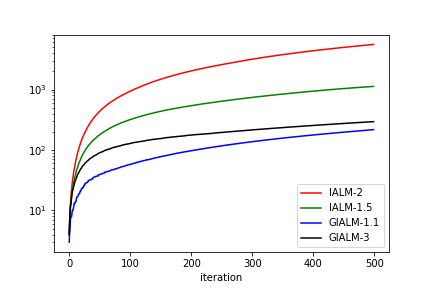}\quad
 \caption{Function values (left) and total number of iterations in subproblems (right)}\label{fig:cameraman result}
\end{figure}\vspace*{-0.2in}

\subsection{Randomly Generated Examples}

In the second experiment, we test the GIALM and IALM methods with different sizes of data. To proceed, the $m\times n$ matrix $A$ and vector $b\in\R^m$ are generated randomly with i.i.d. (identically and independently distributed) standard Gaussian entries. We employ the following stopping criterion used in \cite{kmptjogo,kmptmp,lsk} (see the discussions therein):
\begin{align}\label{stopping criterion dfs}
\eta_k:=\dfrac{\norm{x^k-\prox_{\gamma \norm{\cdot}_1}(x^k-A^*(Ax^k-b))}}{1+\norm{x^k}+\norm{Ax^k-b}}\le 10^{-6}.
\end{align}
We also stop the algorithms if they reach either the time limit of 4000 seconds, or the maximum number of iterations of 200,000. The initial points are chosen as $x^1=0\in\R^n$ for all the algorithms. The selections of the tested parameter $\gamma$ are $10^{-3}$ and $10^{-3}\norm{A^*b}_\infty$, where $\norm{x}_\infty:=\max\set{|x_i|,i=1,\ldots,n}$ for any $x=(x_1,\ldots,x_n)\in\R^n$. While running the tests, we choose the proximal parameter as $\lambda=0.01$ to get the best performance for the tested methods. The detailed information and the results are presented in the table below, where `TN', `iter', `$\eta_k$', and `time' stand, respectively, for test number, the total number of iterations, the residual \eqref{stopping criterion dfs} in the last iteration, and CPU running time of the methods. The tests using the regularization parameter $\gamma=10^{-3}\norm{A^*b}_\infty$ are signified by the asterisks (*) after their test numbers.
\begin{table}[H]
\small
\centering
\resizebox{17cm}{!} 
{\begin{tabular}{|ccc|lll|lll|lll|lll|} 
\hline
\multirow{2}{*}{TN} & \multirow{2}{*}{m} & \multirow{2}{*}{n} & \multicolumn{3}{c|}{IALM-1.5}& \multicolumn{3}{c|}{IALM-2} & \multicolumn{3}{c|}{GIALM-1.1}             & \multicolumn{3}{c|}{GIALM-3} \\ 
\cline{4-15}
&  &  & \multicolumn{1}{c}{iter} & \multicolumn{1}{c}{$\eta_k$} & \multicolumn{1}{c|}{time} & \multicolumn{1}{c}{iter} & \multicolumn{1}{c}{$\eta_k$} & \multicolumn{1}{c|}{time} & \multicolumn{1}{c}{iter} & \multicolumn{1}{c}{$\eta_k$} & \multicolumn{1}{c|}{time} & \multicolumn{1}{c}{iter} & \multicolumn{1}{c}{$\eta_k$} & \multicolumn{1}{c|}{time}  \\ 
\hline
1*& 500 & 1000& 40688  & 1.0E-06  & 68.19   & 40725  & 1.0E-06  & 524.31  & 40495  & 1.0E-06  & 20.78   & 40634  & 1.0E-06  & 25.08    \\
2*& 1000& 1000& 1963   & 1.0E-06  & 28.16   & 2110   & 1.0E-06  & 80.55   & 1985   & 1.0E-06  & 6.21    & 2065   & 1.0E-06  & 10.23    \\
3*& 1000& 2000& 28444  & 1.0E-06  & 879.53  & 8020   & 9.1E-05  & 4000    & 28613  & 1.0E-06  & 291.99  & 28546  & 1.0E-06  & 407      \\
4*& 2000& 2000& 2447   & 1.0E-06  & 979.43  & 2588   & 1.0E-06  & 3999.08 & 2520   & 1.0E-06  & 215.27  & 2560   & 1.0E-06  & 430.91   \\
5*& 2000& 4000& 3917   & 2.0E-04  & 4000    & 608    & 4.8E-03  & 4000    & 28929  & 1.0E-06  & 2594.49 & 21416  & 2.2E-06  & 4000     \\
6*& 4000& 4000& 952    & 9.8E-07  & 3701.33 & 153    & 7.8E-03  & 4000    & 1076   & 9.9E-07  & 1318.44 & 1114   & 1.0E-06  & 2704.56  \\
7 & 500 & 1000& 200000 & 2.1E-05  & 253.4   & 200000 & 2.1E-05  & 1802.58 & 200000 & 3.1E-05  & 96.91   & 200000 & 2.1E-05  & 98.29    \\
8 & 1000& 1000& 75875  & 1.5E-04  & 4000    & 35188  & 5.2E-04  & 4000    & 200000 & 1.0E-05  & 233.99  & 200000 & 1.0E-05  & 244.08   \\
9 & 1000& 2000& 87898  & 8.1E-05  & 4000    & 16867  & 7.2E-04  & 4000    & 200000 & 3.1E-05  & 1548.4  & 200000 & 2.1E-05  & 1626.42  \\
10& 2000& 2000& 1668   & 1.6E-02  & 4000    & 928    & 3.1E-02  & 4000    & 183214 & 6.1E-06  & 4000    & 109370 & 4.7E-05  & 4000     \\
11& 2000& 4000& 6852   & 1.7E-03  & 4000    & 2298   & 4.5E-03  & 4000    & 76571  & 3.6E-04  & 4000    & 58342  & 1.1E-04  & 4000     \\
12& 4000& 4000& 184    & 1.2E-01  & 4000    & 118    & 1.8E-01  & 4000    & 19637  & 1.4E-03  & 4000    & 6219   & 3.8E-03  & 4000     \\
\hline
\end{tabular}}
\caption{Results of IALM and GIALM in random tests}
\end{table}

It can be seen that in Tests~1-6, GIALM-1.1 exhibits the best performance since it achieves the main stopping criterion and has the lowest running time. In Test 7, GIALM-3 is the best since it achieves the smallest residual with the running time lower than IALM-1.5 and IALM-2. In the other tests, GIALM methods also have better performances, i.e., they achieve a smaller residual with the running time less than for the IALM methods. The role of the controlling error of GIALM is presented clearly in Test~12, where GIALM-1.1 executes nearly 20,000 iterations while IALM-2 executes only 118 iterations and thus stagnates at a solution with a much larger residual $\eta_k$. This is because the error of the subproblems in IALM, not being controlled as in GIALM, becomes too small. Therefore, the subproblems of IALM waste much more time to stop. This observation is {confirmed} by the Figure~\ref{fig:test 12} below, which illustrates the residual $\eta_k$ and the error $\omega_k$ in the subproblems of the algorithms.

\begin{figure}[H]
\centering
\includegraphics[width=.45\textwidth]{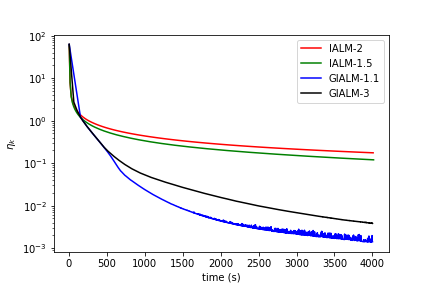}\quad
\includegraphics[width=.45\textwidth]{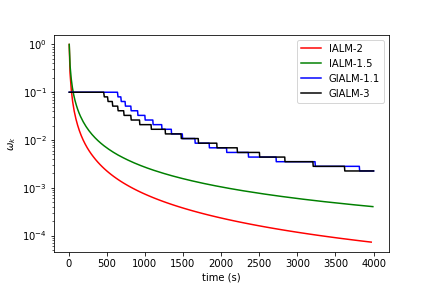}\quad
 \caption{Value of residual $\eta_k$ (left) and error $\omega_k$ (right) in Test 12}\label{fig:test 12}
\end{figure}

In conclusion, GIALM performs better than the classical IALM in these numerical experiments.

\subsection*{Acknowledgements} The authors are very grateful to the editors and the anonymous referees for their helpful remarks and suggestions, which allowed us to significantly improve the original presentation.\vspace*{-0.2in}

\section*{}

\end{document}